  \theoremstyle{plain}
    \newtheorem{theorem}{Theorem}[section]
    \newtheorem{proposition}[theorem]{Proposition}
    \newtheorem{subsec}[theorem]{}
\theoremstyle{definition}
    \newtheorem{definition}[theorem]{Definition}
\theoremstyle{remark}
\title{}
\author{}
\date{}
\begin{document}
\title[Extension theory of Rota-Baxter algebras and dendriform algebras]{A unified extension theory of Rota-Baxter algebras, dendriform algebras, and a fundamental sequence of Wells}

\author{Apurba Das}
\address{Department of Mathematics,
Indian Institute of Technology Kharagpur, Kharagpur-721302, West Bengal, India.}
\email{apurbadas348@gmail.com, apurbadas348@maths.iitkgp.ac.in}

\author{Nishant Rathee}
\address{Harish-Chandra Research Institute,  A CI of Homi Bhabha National Institute, Chhatnag Road, Jhunsi, Prayagraj - 211019, India.}
\email{nishant@hri.res.in}


%


%


\subjclass[2010]{16S80, 16W99, 17B10, 17B56}
\keywords{{Rota-Baxter algebras, Dendriform algebras, Non-abelian extensions, Non-abelian Cohomology, Wells exact sequence.}}

\begin{abstract}
A Rota-Baxter algebra $A_R$ is an algebra $A$ equipped with a distinguished Rota-Baxter operator $R$ on it. Rota-Baxter algebras are closely related to dendriform algebras introduced by Loday. In this paper, we first consider the non-abelian extension theory of Rota-Baxter algebras and classify them by introducing the non-abelian cohomology. Next, given a non-abelian extension $0 \rightarrow B_S \rightarrow E_U \rightarrow A_R \rightarrow 0$ of Rota-Baxter algebras, we construct the Wells type exact sequences and find their role in extending a Rota-Baxter automorphism $\beta \in \mathrm{Aut}(B_S)$ and lifting a Rota-Baxter automorphism $\alpha \in \mathrm{Aut}(A_R)$ to an automorphism in $\mathrm{Aut}(E_U)$. We end this paper by considering a similar study for dendriform algebras.
\end{abstract}


\maketitle



\noindent

\thispagestyle{empty}

\tableofcontents

\vspace{0.2cm}

\section{Introduction}
Rota-Baxter operators and their relations with dendriform algebras is an active area of research. The notion of the Rota-Baxter operator first appeared in the work of Baxter in the fluctuation theory of probability \cite{baxter}. Subsequently, such operator and its variants are explored by some well-known mathematicians like Atkinson, Cartier and Rota, among others \cite{atkinson,cartier,rota}. In the last twenty years, Rota-Baxter operators on algebras pay much more attention due to their connection with combinatorics, splitting of algebras, Yang-Baxter equations, algebraic operads and quantum field theory \cite{aguiar,aguiar-pre-lie,connes,bai,guo-keigher,guo-book}. A Rota-Baxter algebra $A_R$ is an algebra $A$ equipped with a distinguished Rota-Baxter operator $R: A \rightarrow A$. On the other hand, Loday introduced dendriform algebras in the periodicity phenomenons in algebraic $K$-theory \cite{dialgebra}. They are also related to combinatorics (such as binary trees, shuffles etc.) and splitting of algebras \cite{lod-val-book,ronco-dend}. In \cite{aguiar,aguiar-pre-lie} Aguiar observed that a Rota-Baxter algebra naturally induces a dendriform algebra structure. This yields a functor from the category of Rota-Baxter algebras to the category of dendriform algebras.

\medskip

Algebraic structures are better understood by their extension theory and cohomology theory. Recently, such theories for Rota-Baxter algebras and dendriform algebras are studied in \cite{das-dend,DasSK,DasSKbimod,das-mishra-hazra,guodong-zhou,jiang-sheng,laza-sheng}. However, in the above-mentioned references, only the abelian extension theories are considered. There are some other type of extensions, such as central extensions and non-abelian extensions. Among all, non-abelian extension is the more general one and it unifies others. The notion of non-abelian extension theory goes back to Eilenberg and Maclane, who first considered such theory for abstract groups \cite{eilen}. Subsequently, non-abelian extensions for (super) Lie algebras, Leibniz algebras and associative are studied in \cite{casas,fial-pen,inas,liu-sheng-wang,yael,gouray}. Our first aim here is to present the non-abelian extension theory for Rota-Baxter algebras. A non-abelian extension of a Rota-Baxter algebra $A_R$ by another Rota-Baxter algebra $B_S$ is a short exact sequence $0 \rightarrow B_S \xrightarrow{i} E_U \xrightarrow{\pi} A_R \rightarrow 0$ of Rota-Baxter algebras. We also consider non-abelian $2$-cocycles as certain triple of maps $(\chi, \psi, \Phi)$ satisfying some identities. The set of all non-abelian $2$-cocycles modulo certain equivalence is called the non-abelian cohomology, denoted by $H^2_{nab}(A_R, B_S)$. We show that there is a one-to-one correspondence between equivalence classes of non-abelian extensions of $A_R$ by $B_S$ and the non-abelian cohomology $H^2_{nab}(A_R,B_S).$ (cf. Theorem \ref{non-ab-thm}).


\medskip

Given an extension of some algebraic structures, there is some interesting study about the inducibility of pair of automorphisms. Such study were first initiated by Wells in extensions of abstract groups \cite{wells}. In the same paper, the author also constructs a short exat sequence (popularly known as the fundamentalshort exact sequence of Wells) connecting various automorphism groups. A similar study has been developed in various other contexts \cite{hazra-habib,jin,passi}. In the present paper, we emphasize this study in non-abelian extensions of Rota-Baxter algebras. Let $0 \rightarrow B_S \xrightarrow{i} E_U \xrightarrow{\pi} A_R \rightarrow 0$ be a non-abelian extension of Rota-Baxter algebras. Let $\mathrm{Aut}_B (E_U)$ be the set of all Rota-Baxter automorphisms $\gamma \in \mathrm{Aut}(E_U)$ that satisfies $\gamma|_B \subset B$. Then there is a group homomorphism $\tau : \mathrm{Aut}_B (E_U) \rightarrow \mathrm{Aut}(B_S) \times \mathrm{Aut}(A_R)$, $\tau (\gamma) = (\gamma|_B, \pi \gamma s)$, where $s : A \rightarrow E$ is any section of the map $\pi$. We say that a pair $(\beta, \alpha) \in \mathrm{Aut}(B_S) \times \mathrm{Aut}(A_R)$ of Rota-Baxter automorphisms an inducible pair if $(\beta, \alpha)$ lies in the image of the map $\tau$. Given a non-abelian extension as above and a pair $(\beta, \alpha) \in \mathrm{Aut}(B_S) \times \mathrm{Aut}(A_R)$, we twist the non-abelian $2$-cocycle $(\chi, \psi, \Phi)$ to construct a new $2$-cocycle $( \chi_{(\beta, \alpha)}, \psi_{(\beta, \alpha)}, \Phi_{(\beta, \alpha)} )$. We show that a pair $(\beta, \alpha)$ is inducible if and only if the non-abelian $2$-cocycles $(\chi, \psi, \Phi)$ and $( \chi_{(\beta, \alpha)}, \psi_{(\beta, \alpha)}, \Phi_{(\beta, \alpha)} )$ are equivalent, thus they corresponds to the same element in $H^2_{nab}(A_R,B_S)$  (cf. Theorem \ref{thm-inducibility}).
To better understand the corresponding obstruction, we introduce the Wells map $\mathcal{W} : \mathrm{Aut}(B_S) \times \mathrm{Aut}(A_R) \rightarrow H^2_{nab}(A_R, B_S)$ associated to the given non-abelian extension of Rota-Baxter algebras. A pair $(\beta, \alpha)$ of Rota-Baxter automorphisms is inducible if and only if $\mathcal{W}((\beta, \alpha)) = 0$. Next, we obtain a short exact sequence (generalizing the fundamental sequence of Wells) connecting various automorphism groups and the non-abelian cohomology $H^2_{nab}(A_R, B_S)$  (cf. Theorem \ref{thm-wells}). We also construct two generalizations of the Wells exact sequence (cf. Theorem \ref{2thm-wells}). Finally, we find their role in extending a Rota-Baxter automorphism $\beta \in \mathrm{Aut}(B_S)$ and lifting a Rota-Baxter automorphism $\alpha \in \mathrm{Aut}(A_R)$ to an automorphism in $\mathrm{Aut}(E_U)$ (cf. Theorem \ref{2thm-wells-thm}).

\medskip

Abelian extensions of a Rota-Baxter algebra by a Rota-Baxter bimodule were classified in \cite{DasSKbimod}. We observe how this result can be obtained from our classification result for non-abelian extensions (cf. Theorem \ref{ab-ext-2ab}). We also discuss the inducibility problem and the fundamental sequence of Wells in the context of abelian extensions of a Rota-Baxter algebra (cf. Theorem \ref{ab-ext-ind-thm} and Theorem \ref{ab-ext-wells-thm}).

\medskip

In the last part of this paper, we focus on dendriform algebras. In \cite{das-dend,dialgebra,lod-val-book} the authors explicitly studied ordinary cohomology and abelian extensions for dendriform algebras. Here we consider non-abelian extensions of dendriform algebras and classify them by non-abelian cohomology (cf. Theorem \ref{non-ab-dend-thm}). This cohomology is much more technical as it involves certain combinatorial maps. Given an extension of dendriform algebras, we also find a necessary and sufficient condition for a pair of dendriform automorphisms to be inducible. We also discuss the fundamental sequence of Wells in the context of dendriform algebras.

\medskip

The paper is organized as follows. In Section \ref{sec-2}, we recall some preliminaries on Rota-Baxter algebras and dendriform algebras. Non-abelian extensions of Rota-Baxter algebras and their classifications are given in Section \ref{sec-3}. The inducibility of a pair of Rota-Baxter automorphisms is considered in Section \ref{sec-4}. We also construct the analogue of the fundamental sequence of Wells and its generalizations. In Section \ref{sec-5}, we show how the results of non-abelian extensions can be realized for abelian extensions of Rota-Baxter algebras. Finally, in Section \ref{sec-6}, we discuss extensions of dendriform algebras and the inducibility of pair of dendriform automorphisms.

\medskip

All modules, (multi)linear maps, associative algebras and tensor products are over a commutative ring ${\bf k}$ of characteristic $0$. The elements of the algebra $A$ are usually denoted by $a, b, c, \ldots$ and the algebra multiplication is denoted by $a \cdot_A b$, for $a, b \in A$. The elements of $B$ are usually denoted by $u, v, w, \ldots$ and the elements of $E$ are denoted by $e, f, \ldots$.

\section{Preliminaries on Rota-Baxter algebras and dendriform algebras}\label{sec-2}

In this section, we recall some basics of Rota-Baxter algebras and dendriform algebras. Our main references are \cite{aguiar,aguiar-pre-lie,dialgebra,guo-book}.

\medskip

An associative algebra is a {\bf k}-module $A$ equipped with a {\bf k}-bilinear product $\mu_A : A \times A \rightarrow A, ~(a,b) \mapsto a \cdot_A b$ satisfying the associativity:~ $(a \cdot_A b) \cdot_A c = a\cdot_A (b \cdot_A c)$, for all $a, b, c \in A$. An associative algebra as above may be simply denoted by $A$ when the product is clear from the context. For an associative algebra $A$, an $A$-bimodule is a {\bf k}-module $B$ equipped with {\bf k}-bilinear maps $l : A \times B \rightarrow B, ~(a,u) \mapsto a \cdot u$ and $r: B \times A \rightarrow B,~(u,a) \mapsto u \cdot a$ (called left and right $A$-actions, respectively) satisfying
\begin{align*}
(a \cdot_A b) \cdot u = a \cdot (b \cdot u), \quad (a \cdot u) \cdot b = a \cdot ( u \cdot b), \quad (u \cdot a) \cdot b = u \cdot (a \cdot_A b), ~\text{ for } a, b \in A, u \in B. 
\end{align*}

\begin{definition}
Let $A$ be an associative algebra. A {\bf Rota-Baxter operator} on $A$ is a ${\bf k}$-linear map $R: A \rightarrow A$ satisfying
\begin{align*}
R(a) \cdot_A R(b) = R \big(  R(a) \cdot_A b + a \cdot_A R(b) \big), \text{ for } a, b \in A.
\end{align*}
A {\bf Rota-Baxter algebra} is an associative algebra $A$ equipped with a distinguished Rota-Baxter operator $R$. We denote such a Rota-Baxter algebra simply by $A_R$.
\end{definition}

\begin{definition}
Let $A_R$ be a Rota-Baxter algebra. A {\bf Rota-Baxter bimodule} over $A_R$ consists of an $A$-bimodule $B$ together with a {\bf k}-linear map $S: B \rightarrow B$ satisfying
\begin{align*}
R(a) \cdot S(u) = S \big( R(a) \cdot u + a \cdot S(u)  \big)  ~ \text{ and } ~ S(u) \cdot R(a) = S \big(  S(u) \cdot a + u \cdot R(a) \big), \text{ for } a \in A, u \in B.
\end{align*}
\end{definition}

We denote a Rota-Baxter bimodule as above simply by $B_S$. The reader should not confuse with the notation of a Rota-Baxter algebra. It follows that any Rota-Baxter algebra $A_R$ is a Rota-Baxter bimodule over itself.

\medskip



\begin{definition}
A dendriform algebra is a triple $(\mathcal{A} , \prec_\mathcal{A}, \succ_\mathcal{A})$ consisting of a ${\bf k}$-module $\mathcal{A}$ equipped with two $\mathbf{k}$-bilinear operations $\prec_\mathcal{A}, \succ_\mathcal{A} : \mathcal{A} \otimes \mathcal{A} \rightarrow \mathcal{A}$ satisfying
\begin{align}
(a \prec_\mathcal{A} b) \prec_\mathcal{A} c = ~& a \prec_\mathcal{A} (b \prec_\mathcal{A} c + b \succ_\mathcal{A} c), \label{dend-1}\\
(a \succ_\mathcal{A} b) \prec_\mathcal{A} c =~& a \succ_\mathcal{A} (b \prec_\mathcal{A} c), \label{dend-2}\\
(a \prec_\mathcal{A} b + a \succ_\mathcal{A} b) \succ_\mathcal{A} c =~& a \succ_\mathcal{A} (b \succ_\mathcal{A} c), \text{ for } a, b, c \in \mathcal{A}. \label{dend-3}
\end{align}
A morphism between two dendriform algebras is given by a $\mathbf{k}$-linear map that preserves the structures.
\end{definition}

If $(\mathcal{A}, \prec_\mathcal{A}, \succ_\mathcal{A})$ is a dendriform algebra, it follows from (\ref{dend-1}), (\ref{dend-2}), (\ref{dend-3}) that the sum operation
\begin{align*}
a * b := a \prec_\mathcal{A} b + a \succ_\mathcal{A} b, \text{ for } a, b \in \mathcal{A}
\end{align*}
is associative. In other words, $(\mathcal{A}, *)$ is an associative algebra, called the `total' associative algebra. Thus, dendriform  algebras can be thought of as splitting of associative algebras.

Let $A_R$ be a Rota-Baxter algebra. Then the underlying ${\bf k}$-module $A$ inherits a dendriform algebra structure with the structure operations given by
\begin{align*}
a \prec b = a \cdot_A R(b) ~~~ \text{ and } ~~~ a \succ b = R(a) \cdot_A b, \text{ for } a, b \in A.
\end{align*}
This is called the dendriform algebra induced from the Rota-Baxter algebra $A_R.$ The corresponding total associative algebra is given by $(A, \ast_R)$, where $a \ast_R b = R(a) \cdot_A b + a \cdot_A R(b)$, for $a, b \in A$.

\section{Non-abelian extensions of Rota-Baxter algebras}\label{sec-3}
In this section, we study the non-abelian extension theory of Rota-Baxter algebras. We introduce the non-abelian cohomology group to parametrize equivalence classes on non-abelian extensions.

\begin{definition}\label{defin-nab}
(i) Let $A_R$ and $B_S$ be two Rota-Baxter algebras. A {\bf non-abelian extension} of $A_R$ by $B_S$ consists of a short exact sequence of Rota-Baxter algebras of the form
\begin{align}\label{nab-rb-seq}
\xymatrix{
0 \ar[r] & B_S \ar[r]^i & E_U \ar[r]^\pi & A_R \ar[r] & 0.
}
\end{align}
\end{definition}

We often denote a non-abelian extension as above simply by $E_U$ when the structure maps $i$ and $\pi$ are understood.

(ii) Two non-abelian extensions $E_U$ and $E'_{U'}$ are said to be {\bf equivalent} if there is a morphism $E_U \xrightarrow{\varphi} E'_{U'}$ of Rota-Baxter algebras making the following diagram commutative

\begin{align}\label{nab-rb-seq-eq}
\xymatrix{
0 \ar[r] & B_S \ar[r]^i \ar@{=}[d] & E_U \ar[r]^\pi \ar[d]^\varphi & A_R \ar[r] \ar@{=}[d] & 0 \\
0 \ar[r] & B_S \ar[r]_{i'} & E'_{U'} \ar[r]_{\pi'} & A_R \ar[r] & 0.
}
\end{align}

We denote by $\mathrm{Ext} (A_R, B_S)$ the set of all equivalence classes of non-abelian extensions of $A_R$ by $B_S$. In the following, we will parametrize the set $\mathrm{Ext} (A_R, B_S)$ by certain non-abelian cohomology. We first introduce the followings.

\begin{definition}\label{defn-nab-2-co-rb}
(i) Let $A_R$ and $B_S$ be two Rota-Baxter algebras. A {\bf non-abelian $2$-cocycle} on $A_R$ with values in $B_S$ consists of a triple $(\chi, \psi, \Phi)$ of maps 
\begin{align*}
\chi : A^{\otimes 2} \rightarrow B,  \quad \psi : (A \otimes B) \oplus (B \otimes A) \rightarrow B ~~~~ ~~~ \text{ and } ~~~~ ~~~ \Phi : A \rightarrow B ~~~ \text{ satisfying for } a, b \in A, u \in B,
\end{align*}
\begin{align}
&\begin{cases}\label{nabb1}
\psi (a, \psi (b, u))= \psi (a \cdot_A b, u) + \chi (a, b) \cdot_B u,\\
\psi (a, \psi (u, b)) = \psi (\psi (a, u), b), \\
\psi (\psi (u, a), b) = \psi (u, a \cdot_A b) + u \cdot_B \chi (a, b),\\
\end{cases}\\
&  \quad \psi (a , \chi (b, c)) - \chi (a \cdot_A b , c) + \chi (a, b \cdot_A c) - \psi (\chi (a, b), c) = 0, \label{nabb2}\\
&\begin{cases}\label{nabb3}
\psi (R(a), S(u)) = S \big( \psi (R(a), u) + \psi (a, S(u)) \big) + S (\Phi (a) \cdot_B u) - \Phi (a) \cdot_B S(u), \\
\psi (S(u), R(a)) = S \big( \psi (S(u), a) + \psi (u, R(a)) \big) + S (u \cdot_B \Phi (a)) - S(u) \cdot_B \Phi(a), 
\end{cases}\\ \medskip
&\quad \chi (R(a), R(b)) - S \big(  \chi (R(a), b) + \chi (a, R(b)) \big) - \Phi \big( R(a) \cdot_A b + a \cdot_A R(b) \big) \label{nabb4}\\
&\qquad + \psi (R(a), \Phi (b)) + \psi (\Phi (a), R(b)) - S \big( \psi (\Phi (a), b) + \psi (a, \Phi (b))   \big) + \Phi (a) \cdot_B \Phi (b) = 0. \nonumber
\end{align}

(ii) Two non-abelian $2$-cocycles $(\chi, \psi, \Phi)$ and $(\chi', \psi', \Phi')$ are said to {\bf equivalent} if there exists a linear map $\phi : A \rightarrow B$ satisfying
\begin{align*}
&\begin{cases}
\psi (a, u) - \psi' (a, u) = \phi (a) \cdot_B u,\\
\psi (u, a) - \psi' (u, a) = u \cdot_B \phi (a),
\end{cases}\\
& \quad \chi (a, b) - \chi' (a, b) = \psi' (a, \phi (b)) + \psi' (\phi (a) , b) - \phi (a \cdot_A b) + \phi (a) \cdot_B \phi (b),\\
& \quad \Phi (a) - \Phi'(a) = S \phi (a) - \phi R(a), \text{ for } a, b \in A, {u \in B}.
\end{align*}
\end{definition}

In this case, we simply write $(\chi, \psi, \Phi) \overset{\phi}{\sim} (\chi', \psi', \Phi')$. The set of all equivalence classes of non-abelian $2$-cocycles are denoted by $H^2_{nab}(A_R, B_S)$. It is also called the {\bf non-abelian cohomology}.

\begin{theorem}\label{non-ab-thm}
Let $A_R$ and $B_S$ be two Rota-Baxter algebras. Then there is a one-to-one correspondence between the equivalence classes of non-abelian extensions of $A_R$ by $B_S$ and the non-abelian cohomology $H^2_{nab}(A_R, B_S)$. In other words,
\begin{align*}
\mathrm{Ext} (A_R, B_S) \cong H^2_{nab}(A_R, B_S).
\end{align*}
\end{theorem}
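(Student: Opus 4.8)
The plan is to construct the correspondence in both directions and show they are mutually inverse. First I would start with a non-abelian extension $0 \to B_S \xrightarrow{i} E_U \xrightarrow{\pi} A_R \to 0$ and choose a $\mathbf{k}$-linear section $s : A \to E$ of $\pi$ (which exists since $A$ is a module over a ring and the sequence splits as $\mathbf{k}$-modules, giving $E \cong B \oplus s(A)$ as $\mathbf{k}$-modules). Using $s$ I would define the three maps: $\chi(a,b) := s(a) \cdot_E s(b) - s(a \cdot_A b)$, which lands in $B = \ker \pi$; the map $\psi$ on $A \otimes B$ by $\psi(a,u) := s(a) \cdot_E u$ and on $B \otimes A$ by $\psi(u,a) := u \cdot_E s(a)$, which again land in $B$ because $B$ is an ideal; and $\Phi(a) := U(s(a)) - s(R(a))$, which lies in $B$ since $\pi U = R \pi$. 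The identities \eqref{nabb1} and \eqref{nabb2} are then exactly the associativity constraints of $\cdot_E$ restricted to appropriate slots (these are the classical non-abelian $2$-cocycle conditions for associative algebras), while \eqref{nabb3} encodes the Rota-Baxter relation $U(x) \cdot_E U(y) = U(U(x) \cdot_E y + x \cdot_E U(y))$ with one argument in $s(A)$ and one in $B$, and \eqref{nabb4} is that same Rota-Baxter relation with both arguments in $s(A)$. One also checks that the induced left and right actions of $A$ on $B$ (namely $a \cdot_B u := i^{-1}(s(a)\cdot_E i(u))$ etc.) together with $S$ on $B$ make $B_S$ a Rota-Baxter bimodule over $A_R$ — but here $B_S$ is a fixed Rota-Baxter \emph{algebra}, so I should be careful: the construction produces an action, and equivalence of extensions fixes $B_S$, so the bimodule structure on $B$ used in the cocycle conditions is the one coming from the extension. (Comparing with the statement, the product $\cdot_B$ appearing in Definition \ref{defn-nab-2-co-rb} should be read as this induced action; I would make this precise at the outset.)

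Next I would check that a different choice of section $s'$ yields an equivalent non-abelian $2$-cocycle: writing $s'(a) = s(a) + \phi(a)$ for a unique linear $\phi : A \to B$, a direct computation shows $(\chi', \psi', \Phi') \overset{\phi}{\sim} (\chi, \psi, \Phi)$ with exactly the formulas in Definition \ref{defn-nab-2-co-rb}(ii); moreover equivalent extensions clearly give the same class, so we get a well-defined map $\mathrm{Ext}(A_R, B_S) \to H^2_{nab}(A_R, B_S)$. Conversely, given a non-abelian $2$-cocycle $(\chi, \psi, \Phi)$, I would build an extension on the $\mathbf{k}$-module $E := B \oplus A$ by setting
\[
(u, a) \cdot_E (v, b) := \big( u \cdot_B v + \psi(a, v) + \psi(u, b) + \chi(a, b),~ a \cdot_A b \big),
\quad
U(u, a) := \big( S(u) + \Phi(a),~ R(a) \big),
\]
with $i(u) = (u, 0)$ and $\pi(u,a) = a$. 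Conditions \eqref{nabb1}--\eqref{nabb2} give associativity of $\cdot_E$, and \eqref{nabb3}--\eqref{nabb4} give that $U$ is a Rota-Baxter operator on $(E, \cdot_E)$; thus $E_U$ is a Rota-Baxter algebra fitting in a short exact sequence. Equivalent cocycles yield equivalent extensions via $\varphi(u,a) = (u + \phi(a), a)$, so this descends to $H^2_{nab}(A_R, B_S) \to \mathrm{Ext}(A_R, B_S)$.

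Finally I would verify the two composites are identities: starting from a cocycle, building $E_U$, and then choosing the obvious section $s(a) = (0,a)$ recovers exactly $(\chi, \psi, \Phi)$ on the nose; starting from an extension $E_U$, choosing a section $s$, forming the cocycle, and reconstructing gives an extension canonically equivalent to the original via $(u,a) \mapsto i(u) + s(a)$ (an isomorphism of Rota-Baxter algebras because the multiplication and the operator $U$ on the reconstructed object were defined precisely to match). I expect the main obstacle to be purely bookkeeping: checking that \eqref{nabb3} and \eqref{nabb4} are respectively equivalent to the Rota-Baxter identity for $U$ in the mixed and the $s(A)$-$s(A)$ cases requires expanding $U(u,a) \cdot_E U(v,b)$ and $U\big(U(u,a)\cdot_E(v,b) + (u,a)\cdot_E U(v,b)\big)$ and matching the $B$-components term by term, using the Rota-Baxter bimodule axioms for $B_S$ to handle the $u \cdot_B v$ contributions. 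None of the individual verifications is deep, but there are many of them, so I would present the cocycle-to-extension direction in detail (as it exhibits the formulas) and indicate that the other verifications are analogous and routine.
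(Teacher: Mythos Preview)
Your proposal is correct and follows essentially the same route as the paper: extract $(\chi,\psi,\Phi)$ from a section, verify the cocycle identities from associativity of $\cdot_E$ and the Rota-Baxter identity for $U$, check section-independence and equivalence-invariance, and in the reverse direction equip $A\oplus B$ with the twisted product and the operator $U_\Phi(a,u)=(R(a),S(u)+\Phi(a))$; the paper carries out exactly these steps, with the detailed term-by-term verifications of \eqref{nabb3}, \eqref{nabb4} and of the Rota-Baxter identity for $U_\Phi$ written out in full.

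One small correction to your parenthetical remark: the symbol $\cdot_B$ in Definition~\ref{defn-nab-2-co-rb} is \emph{not} an induced $A$-action to be produced by the extension, but simply the given associative product on the Rota-Baxter algebra $B_S$ (part of the fixed data); the $A$-action on $B$ is entirely encoded by $\psi$, which is part of the cocycle triple. So there is nothing to ``make precise at the outset'' --- the identities \eqref{nabb1}--\eqref{nabb4} already separate the roles of $\psi$ (action-like) and $\cdot_B$ (internal $B$-product), and your verification should use $\cdot_B$ as the restriction of $\cdot_E$ to $B\times B$, which coincides with the given product since $i$ is an algebra map.
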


\begin{proof}
Let $E_U$ be a non-abelian extension of $A_R$ by $B_S$ as of (\ref{nab-rb-seq}).  Let $s$ be a section of the map $\pi$ (i.e. $s : A \rightarrow E$ is a linear map satisfying $\pi s = \mathrm{id}_A$). Depending on that section, we define a triple $(\chi^s, \psi^s, \Phi^s)$ of maps as
\begin{align*}
&\chi^s : A^{\otimes 2} \rightarrow B, ~~~ \chi^s (a, b) := s(a) \cdot_E s(b) - s(a \cdot_A b), \\
&\psi^s : (A \otimes B) \oplus (B \otimes A) \rightarrow B, ~~~ \begin{cases}
\psi^s (a, u) := s(a) \cdot_E u, \\
\psi^s (u, a) := u \cdot_E s(a),\\
\end{cases}\\
&\Phi^s : A \rightarrow B, ~~~ \Phi^s (a) := (Us- sR)(a),
\end{align*}
for $a, b \in A$ and $u \in B$. It has been observed in \cite{gouray} that the maps $\chi^s$ and $\psi^s$ satisfy the identities (\ref{nabb1}) and (\ref{nabb2}). In the following, we will show that $(\chi^s, \psi^s, \Phi^s)$ satisfy the identities (\ref{nabb3}) and (\ref{nabb4}). To see these, we first observe that
\begin{align*}
&\psi^s (R(a), S(u)) - S \big(  \psi^s (R(a), u) + \psi^s (a, S(u)) \big) - S (\Phi^s (a) \cdot_B u) + \Phi^s (a) \cdot_B S(u) \\
&= \cancel{sR (a) \cdot_E S(u)} - S \big( sR(a) \cdot_E u + s(a) \cdot_E S(u)  \big) - S \big(  Us(a) \cdot_E u - sR(a) \cdot_E u \big) \\
&\qquad \qquad + \big(  Us(a) \cdot_E S(u) - \cancel{sR(a) \cdot_E S(u)} \big) \\
&= - U \big(  \cancel{s R(a) \cdot_E u} + s(a) \cdot_E U(u)   \big)  - U \big( Us(a) \cdot_E u - \cancel{sR(a) \cdot_E u} \big) + Us(a) \cdot_E U(u) \quad (\text{as } S = U|_B) \\
&=  - U \big(  s(a) \cdot_E U(u) + Us(a) \cdot_E u \big)  + Us (a) \cdot_E U(u) = 0 \quad (\text{as } U \text{ is a Rota-Baxter operator})
\end{align*}
and
\begin{align*}
&\psi^s (S(u), R(a)) - S \big(  \psi^s (S(u),a) + \psi^s (u, R(a)) \big) - S (u \cdot_B \Phi^s (a)) + S(u) \cdot_B \Phi^s (a) \\
&= \cancel{S (u) \cdot_E sR(a)} - S \big( S(u) \cdot_E s(a) + u \cdot_E sR(a)  \big) - S \big( u \cdot_E Us (a) - u \cdot_E sR(a) \big) \\
& \qquad \qquad + S (u) \cdot_E Us(a) - \cancel{S(u) \cdot_E sR (a)} \\
&= - U \big( U(u) \cdot_E s(a) + \cancel{u \cdot_E sR(a)}  \big) - U \big( u \cdot_E Us(a) - \cancel{u \cdot_E sR(a)} \big) + U(u) \cdot_E Us(a) \quad (\text{as } S = U|_B) \\
&= - U \big( U(u) \cdot_E s(a) + u \cdot_E Us(a) \big) + U(u) \cdot_E Us(a) = 0 \quad (\text{as } U \text{ is a Rota-Baxter operator}).
\end{align*}
Hence the identity (\ref{nabb3}) follows. Finally, we have
\begin{align*}
&\chi^s (R(a), R(b)) - S \big(  \chi^s (R(a), b) + \chi^s (a, R(b)) \big) - \Phi^s \big( R(a) \cdot_A b + a \cdot_A R(b) \big) \\
&\qquad + \psi^s (R(a), \Phi^s (b)) + \psi^s (\Phi^s (a), R(b)) - S \big( \psi^s (\Phi^s (a), b) + \psi^s (a, \Phi^s (b))   \big) + \Phi^s (a) \cdot_B \Phi^s (b) \\
&= \cancel{sR(a) \cdot_E sR(b)} \underbrace{- s \big(  R(a) \cdot_A R(b)}_{(1)} \big) - S \big( \underbrace{sR(a) \cdot_E s(b)}_{(2)} \underbrace{- s (R(a) \cdot_A b) }_{(3)}  \big) - S \big( \underbrace{s(a) \cdot_E sR(b)}_{(5)} \underbrace{- s (a \cdot_A R(b) ) }_{(6)}  \big) \\
& \underbrace{- Us (R(a) \cdot_A b)}_{(3)} \underbrace{+ sR ( R(a) \cdot_A b)}_{(1)} \underbrace{- Us (a \cdot_A R(a))}_{(6)} \underbrace{+ sR (a \cdot_A R(b))}_{(1)} + \underbrace{sR(a) \cdot_E Us (b)}_{(7)} - \cancel{sR(a) \cdot_E sR(b)} \\
&\underbrace{+ Us (a) \cdot_E sR (b)}_{(8)} - \cancel{sR(a) \cdot_E sR(b)} - S \big( Us(a) \cdot_E s(b) \underbrace{- sR(a) \cdot_E s(b)}_{(2)} + s(a) \cdot_E Us(b) \underbrace{- s(a) \cdot_E sR (b)}_{(5)}   \big) \\
& + Us(a) \cdot_E Us(b) \underbrace{- Us(a) \cdot_E sR(b)}_{(8)} \underbrace{-sR(a) \cdot_E Us (b)}_{(7)} + \cancel{sR(a) \cdot_E sR(b)} \\
&= - U \big( Us(a) \cdot_E s(b) + s(a) \cdot_E Us(b)  \big) + Us(a) \cdot_E Us (b)
\end{align*}
which vanishes as $U$ is a Rota-Baxter operator. This verifies the identity (\ref{nabb4}). Hence $(\chi^s, \psi^s, \Phi^s)$ is a non-abelian $2$-cocycle.\\

If $t$ is another section of $\pi$, and $(\chi^t, \psi^t, \Phi^t)$ is the corresponding non-abelian $2$-cocycle, then it is easy to verify that 
\begin{align*}
(\chi^s, \psi^s, \Phi^s) \overset{\phi}{\sim} (\chi^t, \psi^t, \Phi^t), \text{ where } \phi := s - t : A \rightarrow B.
\end{align*}
To see this, we observe that
\begin{align*}
\begin{cases}
\psi^s (a, u) - \psi^t (a, u) = s(a) \cdot_E u - t(a) \cdot_E u = \phi (a) \cdot_B u,\\
\psi^s (u, a) - \psi^t (u, a) = u \cdot_E s(a) - u \cdot_E t(a) = u \cdot_B \phi(a),
\end{cases}
\end{align*}
\begin{align*}
\chi^s (a,b) - \chi^t (a, b) =~& s(a) \cdot_E s(b) - s (a \cdot_A b) - t(a) \cdot_E t(b) + t (a \cdot_A b) \\
=~& (\phi + t)(a) \cdot_E  (\phi + t)(b) -  (\phi + t) (a \cdot_A b) - t(a) \cdot_E t(b) + t(a \cdot_A b) \\
=~& t(a) \cdot_E \phi(b) + \phi(a) \cdot_E t(b) - \phi (a \cdot_A b) + \phi(a) \cdot_B \phi(b)  \\
=~& \psi^t (a, \phi (b)) + \psi^t (\phi (a), b) - \phi (a \cdot_A b) + \phi (a) \cdot_B \phi (b),
\end{align*}
\begin{align*}
\Phi^s (a) - \Phi^t (a) =~& (Us - sR)(a) - (Ut - tR)(a) \\
=~& U \phi (a) - \phi R (a) = S \phi (a) - \phi R(a).
\end{align*}
Therefore, $(\chi^s, \psi^s, \Phi^s)$ and $(\chi^t, \psi^t, \Phi^t)$ corresponds to the same element in $H^2_{nab} (A_R, B_S)$.

\medskip

Next, let $E_U$ and $E'_{U'}$ be two equivalent non-abelian extensions as of (\ref{nab-rb-seq-eq}). For any section $s$ of the map $\pi$, we have $\pi' (\varphi s) = (\pi' \varphi) s = \pi s = \mathrm{id}_A$. This shows that $s' := \varphi s$ is a section of the map $\pi'$. If $(\chi^{s'}, \psi^{s'}, \Phi^{s'})$ is the corresponding non-abelian $2$-cocycle, then for any $a, b \in A$ and $u \in B$,
\begin{align*}
\chi^{s'} (a, b) = s' (a) \cdot_{E'} s'(b) - s'(a \cdot_A b) =~& \varphi s(a) \cdot_{E'} \varphi s(b) - \varphi s (a \cdot_A b) \\
=~& \varphi \big( s(a) \cdot_E s(b) - s (a \cdot_A b) \big) \\
=~& \varphi (\chi^s (a, b)) = \chi^s (a,b),
\end{align*}
\begin{align*}
\begin{cases}
\psi^{s'} (a, u) = s'(a) \cdot_{E'} u = \varphi s (a) \cdot_{E'} u = \varphi (s(a) \cdot_E u) = \varphi (\psi^s (a, u)) = \psi^s (a, u),\\
\psi^{s'} (u, a) =  u \cdot_{E'} s'(a) = u \cdot_{E'} \varphi s (a) = \varphi (u \cdot_E s(a))= \varphi (\psi^s (u,a)) = \psi^s (u, a),
\end{cases}
\end{align*}
\begin{align*}
\Phi^{s'} (a) = (U's' - s' R)(a) =~& U' (\varphi s (a)) - \varphi s (R(a)) \\
=~& \varphi (Us - sR)(a) = \varphi (\Phi^s (a)) = \Phi^s (a).
\end{align*}
Therefore, we have $(\chi^{s'}, \psi^{s'}, \Phi^{s'}) = (\chi^s, \psi^s, \Phi^s)$. Hence we obtain a map
\begin{align*}
\Lambda : \mathrm{Ext} (A_R, B_S) \rightarrow H^2_{nab} (A_R, B_S), ~ [E_U] \mapsto [(\chi^s, \psi^s, \Phi^s)] \text{ for any section } s \text{ of the map } \pi.\\
\end{align*}

To obtain a map in the other direction, we first start with a non-abelian $2$-cocycle $(\chi, \psi, \Phi)$. Consider the ${\bf k}$-module $A \oplus B$ with the bilinear product
\begin{align*}
(a, u) \cdot_{\chi, \psi} (b, v) := (a \cdot_A b,~ \psi(a, v) + \psi (u, b) + \chi (a, b) + u \cdot_B v),
\end{align*}
for $(a, u), (b, v) \in A \oplus B$. Since $\chi, \psi$ satisfy the identities (\ref{nabb1}) and (\ref{nabb2}), it follows that the product $\cdot_{\chi, \psi}$ is associative, which turns $A \oplus B$ into an associative algebra (denoted by $A \oplus_{\chi, \psi} B$). Further, we define a linear map $U_\Phi : A \oplus B \rightarrow  A \oplus B$ by $$U_\Phi (a, u) = (R(a), S(u) + \Phi (a)),$$
for $(a, u) \in A \oplus B$. Then we have
\begin{align*}
&U_\Phi (a, u) \cdot_{\chi, \psi} U_\Phi (b, v) \\
&= \big( R(a) , S(u) + \Phi (a) \big) \cdot_{\chi, \psi} \big( R(b) , S(v) + \Phi (b) \big) \\
&= \big( R(a) \cdot_A R(b), ~ \underbrace{\psi (R(a), S(v))}_{(A1)} + \underbrace{\psi (R(a), \Phi (b))}_{(B1)} + \underbrace{\psi (S(u), R(b))}_{(C1)} + \underbrace{\psi (\Phi (a), R(b))}_{(B2)} + \underbrace{\chi (R(a), R(b))}_{(B3)} \\
& ~~ + \underbrace{S(u) \cdot_B S(v)}_{(D)} + \underbrace{S(u) \cdot_B \Phi (b) }_{(C2)} + \underbrace{\Phi (a) \cdot_B S(v)}_{(A2)} + \underbrace{\Phi (a) \cdot_B \Phi (b)}_{(B4)}   \big) \\
&= \big(  R (R(a) \cdot_A b) + R (a \cdot_A R(b)), ~ \underbrace{ S ( \psi ( R(a), v ) + \psi (a, S(v)) ) + S (\Phi(a) \cdot_B v)}_{(A1) + (A2)}   \\
& ~~ + \underbrace{ S ( \psi (\Phi (a), b)) + \psi (a, \Phi (b)) ) + S ( \chi (R(a), b) + \chi (a, R(b)) ) + \Phi ( R(a) \cdot_A b + a \cdot_A R(b) )  }_{(B1) + (B2) + (B3) + (B4)}   \\
& ~~ + \underbrace{ S ( \psi (S(u), b)  + \psi (u, R(b)) ) + S (u \cdot_B \Phi (b)) }_{(C1) + (C2)} +  \underbrace{S (  S(u) \cdot_B v + u \cdot_B S(v)  )}_{(D)} \big) \\
&= \big( R (R(a) \cdot_A b), ~ S \big(  \psi (R(a), v) + \psi (S(u), b) + \psi (\Phi(a), b) + \chi (R(a), b) +   (S(u) + \Phi(a)) \cdot_B v \big) + \Phi (R(a) \cdot_A b)  \big) \\
& ~~ + \big(  R (a \cdot_A R(b)), ~ S \big(  \psi (a, S(v)) + \psi (a, \Phi (b)) + \psi (u, R(b)) + \chi (a, R(b)) + u \cdot_B (S(u) + \Phi (b))  \big) + \Phi (a \cdot_A R(b))  \big) \\
&= U_\Phi \big(  R(a) \cdot_A b, ~  \psi (R(a), v) + \psi (S(u), b) + \psi (\Phi(a), b) + \chi (R(a), b) +   (S(u) + \Phi(a)) \cdot_B v  \big)  \\
& ~~ + U_\Phi \big( a \cdot_A R(b), ~  \psi (a, S(v)) + \psi (a, \Phi (b)) + \psi (u, R(b)) + \chi (a, R(b)) + u \cdot_B (S(u) + \Phi (b))  \big) \\
&= U_\Phi \big(  (R(a) , S(u) + \Phi (a)) \cdot_{\chi, \psi} (b, v) ~+~ (a, u) \cdot_{\chi,\psi} (R(b) , S(v) + \Phi (b))  \big) \\
&= U_\Phi \big(  U_\Phi (a,u) \cdot_{\chi, \psi} (b, v) ~+~ (a, u) \cdot_{\chi, \psi} U_\Phi (b, v) \big).
\end{align*}
This shows that $U_\Phi$ is a Rota-Baxter operator on the algebra $A \oplus_{\chi, \psi} B$. In other words, $ (A \oplus_{\chi, \psi} B)_{U_\Phi}$ is a Rota-Baxter algebra. It is then easy to see that
\[
\xymatrix{
0 \ar[r] & B_S \ar[r]^i_{b \mapsto (0,b)} & (A \oplus_{\chi, \psi} B)_{U_\Phi} \ar[r]^\pi_{(a,b) \mapsto a} & A_R \ar[r] & 0
}
\]
is a non-abelian extension of Rota-Baxter algebras. Finally, if $(\chi, \psi, \Phi)$ and $(\chi', \psi', \Phi')$ are two equivalent non-abelian $2$-cocycles (see Definition \ref{defn-nab-2-co-rb}(ii)), then we can define a map $\Theta : A \oplus B \rightarrow A \oplus B$ by $\Theta ((a, u)) = (a, u + \phi (a))$, for $(a, u) \in A \oplus B$. For any $(a, u), (b, v) \in A \oplus, B$, we have
\begin{align*}
&\Theta ((a,u) \cdot_{\chi, \psi} (b, v)) \\
&= \Theta \big(  a \cdot_A b, ~ \psi (a,v) + \psi (u,b) + \chi (a,b) + u \cdot_B v \big) \\
&= \big( a \cdot_A b, ~\psi (a, v) + \psi (u,b) + \chi (a,b) + u \cdot_B v + \phi (a \cdot_A b)   \big) \\
&= \big(   a \cdot_A b, ~ \psi' (a, v) + \phi(a) \cdot_B v + \psi' (u,b) + u \cdot_B \phi (b) \\
& \qquad + \chi' (a, b) + \psi (\phi (a), b) + \psi (a, \phi(b)) - \cancel{\phi (a \cdot_A b)} - \phi (a) \cdot_A \phi(b) + u \cdot_B v + \cancel{\phi (a \cdot_A b)} \big) \\
&= \big( a \cdot_A b,~ \psi' (a, v) + \phi (a) \cdot_B v + \psi' (u, b) + u \cdot_B \phi (b)  \\
& \qquad + \chi' (a,b) + \psi' (\phi (a), b) + \phi (a) \cdot_B \phi (b)  + \psi' (a, \phi (b)) + \cancel{\phi (a) \cdot_B \phi(b)} - \cancel{\phi(a) \cdot_B \phi(b)} + u \cdot_B v \big) \\
&= (a , u + \phi (a)) \cdot_{\chi', \psi'} (b, v + \phi (b)) \\
&= \Theta (a, u) \cdot_{\chi', \psi'} \Theta (b, v)
\end{align*}
and
\begin{align*}
(U_{\Phi'} \circ \Theta)(a, u) =~& U_{\Phi'} (a, u+ \phi (a)) \\
=~& \big(  R(a), S(u) + S \phi (a) + \Phi' (a) \big) \\
=~& \big(  R(a), S(u) + \Phi (a) + \phi R (a) \big) \\
=~& \Theta \big( R(a), S(u) + \Phi (a)  \big) = (\Theta \circ U_\Phi)(a, u).
\end{align*}
This shows that $(A \oplus_{\chi, \psi} B)_{U_\Phi} \xrightarrow{\Theta} (A \oplus_{\chi', \psi'} B)_{U_{\Phi'}}$ is a morphism of Rota-Baxter algebras. The map $\Theta$ is infact an equivalence between non-abelian extensions. As a summary, we obtain a map
\begin{align*}
\Upsilon : H^2_{nab}(A_R, B_S) \rightarrow \mathrm{Ext} (A_R, B_S), ~ [(\chi, \psi, \Phi)] \mapsto [(A \oplus_{\chi, \psi} B)_{U_\Phi}].
\end{align*}
Finally, it is tedious but straightforward to see that the maps $\Lambda$ and $\Upsilon$ are inverses to each other. This completes the proof.
\end{proof}

\section{The inducibility problem and the fundamental exact sequence of Wells}\label{sec-4}
Let $A_R$ and $B_S$ be two Rota-Baxter algebras. A {morphism} $A_R \xrightarrow{\varphi} B_S$ of Rota-Baxter algebras is an algebra homomorphism $\varphi : A \rightarrow B$ satisfying $S \circ \varphi = \varphi \circ R$. It is said to be an isomorphism if $\varphi$ is a linear isomorphism. Let $A_R$ be a fixed Rota-Baxter algebra. The set of all automorphisms of the Rota-Baxter algebra $A_R$ forms a group $\mathrm{Aut} (A_R)$, called the automorphism group. Given a non-abelian extension $0 \rightarrow B_S \xrightarrow{i} E_U \xrightarrow{\pi} A_R \rightarrow 0$ of Rota-Baxter algebras, here we study the inducibility of a pair of Rota-Baxter automorphisms $(\beta, \alpha) \in \mathrm{Aut}(B_S) \times \mathrm{Aut} (A_R)$ from an automorphism in $\mathrm{Aut}(E_U)$. To better understand the corresponding obstruction, we introduce the analogue of the fundamental exact sequence of Wells. We also construct two generalizations of this sequence. In particular, we answers when a Rota-Baxter automorphism $\beta \in \mathrm{Aut} (B_S)$ extends and a Rota-Baxter automorphism $\alpha \in \mathrm{Aut}(A_R)$ lifts to a Rota-Baxter automorphism in $\mathrm{Aut}(E_U)$.

\medskip

Let $0 \rightarrow B_S \xrightarrow{i} E_U \xrightarrow{\pi} A_R \rightarrow 0$ be a non-abelian extension of Rota-Baxter algebras. Let 
\begin{align*}
\mathrm{Aut}_B (E_U) = \{ \gamma \in \mathrm{Aut}(E_U) ~|~ \gamma|_B \subset B \}.
\end{align*}
It follows that, if $\gamma \in \mathrm{Aut}_B (E_U)$ then $\gamma|_B \in \mathrm{Aut}(B_S)$. Moreover, for any $\gamma \in \mathrm{Aut}_B (E_U)$, we may define a map $\overline{\gamma} : A \rightarrow A$ by
\begin{align*}
\overline{\gamma} (a) := \pi \gamma s (a), \text{ for } a \in A.
\end{align*}
Here $s$ is any section of the map $\pi$. Note that the map $\overline{\gamma}$ is independent of the choice of $s$. Since $E$ is isomorphic to the module $A \oplus B$, the map $\pi$ can be regarded as the projection onto the submodule $A$. As $\gamma$ is an automorphism on $E$ preserving the submodule $B$, it also preserves $A$. Thus, $\overline{\gamma} = \pi \gamma s$ is a bijection on $A$. Moreover, for any $a, b \in A$,
\begin{align*}
\overline{\gamma} (a \cdot_A b) = \pi \gamma (s (a \cdot_A b)) =~& \pi \gamma \big(  s(a) \cdot_E s(b) - \chi (a, b) \big) \\
=~& \pi \gamma \big( s(a) \cdot_E s(b) \big) \quad (\text{as } \gamma|_B \subset B \text{ and } \pi|_B = 0) \\
=~& \pi \gamma s (a) \cdot_A \pi \gamma s (b) = \overline{\gamma} (a) \cdot_A \overline{\gamma} (b)
\end{align*}
and
\begin{align*}
(R \circ \overline{\gamma} - \overline{\gamma} \circ R) (a) =~& (R \pi \gamma s - \pi \gamma s R) (a) \\
=~& (\pi U \gamma s - \pi \gamma s R) (a) \quad (\text{as } R \pi = \pi U) \\
=~& \pi \gamma (Us - sR)(a)  \quad (\text{as } U \gamma = \gamma U) \\
=~& 0 \quad (\text{as } (Us - sR)(a) \in B, ~ \gamma|_B \subset B \text{ and } \pi|_B = 0).
\end{align*}
This shows that $\overline{\gamma} \in \mathrm{Aut}(A_R).$ Hence we obtain a map
\begin{align*}
\tau : \mathrm{Aut}_B (E_U) \rightarrow \mathrm{Aut}(B_S) \times \mathrm{Aut}(A_R), ~ \tau (u) := (\gamma|_B , \overline{\gamma})
\end{align*}
which is a group homomorphism. A pair $(\beta, \alpha) \in \mathrm{Aut}(B_S) \times \mathrm{Aut}(A_R)$ of Rota-Baxter automorphisms is said to be inducible if it lies in the image of $\tau$.

In the following, we will find a necessary and sufficient condition for the inducibility of a pair of Rota-Baxter automorphism $(\beta, \alpha) \in \mathrm{Aut}(B_S) \times \mathrm{Aut}(A_R)$. Before state our main result, we need some more notations. Let $(\chi, \psi, \Phi)$ be the non-abelian $2$-cocycle associated to the non-abelian extension $0 \rightarrow B_S \xrightarrow{i} E_U \xrightarrow{\pi} A_R \rightarrow 0$ and a fixed section $s$. We define a new triple $(\chi_{(\beta, \alpha)}, \psi_{(\beta, \alpha)}, \Phi_{(\beta, \alpha)})$ of maps
\begin{align*}
\chi_{(\beta, \alpha)} \in \mathrm{Hom}(A^{\otimes 2}, B), \quad \psi_{(\beta, \alpha)} : (A \otimes B) \oplus (B \otimes A) \rightarrow B   ~~~ \text{ and } ~~~ \Phi_{(\beta, \alpha)} \in \mathrm{Hom} (A, B)  ~~\text{ by }
\end{align*}
\begin{align*}
\chi_{(\beta, \alpha)} (a, b) :=~& \beta \circ \chi (\alpha^{-1} (a), \alpha^{-1}(b)), \\
\psi_{(\beta, \alpha)} (a, u) :=~& \beta \circ \psi (\alpha^{-1}(a), \beta^{-1}(u)), \\
\beta_{(\beta, \alpha)} (u, a) :=~& \beta \circ \psi (\beta^{-1}(u), \alpha^{-1}(a)), \\
\Phi_{(\beta, \alpha)} (a) :=~& \beta \circ \Phi (\alpha^{-1} (a)),
\end{align*}
for $a, b \in A$ and $u \in B$. Note that $(\chi, \psi, \Phi)$ is a non-abelian $2$-cocycle implies that the identities (\ref{nabb1}), (\ref{nabb2}), (\ref{nabb3}) and (\ref{nabb4}) are hold. In these identities, if we replace $a, b, u$ by $\alpha^{-1}(a), \alpha^{-1} (b), \beta^{-1}(u)$ respectively, we obtain the corresponding identities for the triple $(\chi_{(\beta, \alpha)}, \psi_{(\beta, \alpha)}, \Phi_{(\beta, \alpha)})$. This shows that $(\chi_{(\beta, \alpha)}, \psi_{(\beta, \alpha)}, \Phi_{(\beta, \alpha)})$ is a non-abelian $2$-cocycle.

\begin{theorem}\label{thm-inducibility}
Let $0 \rightarrow B_S \xrightarrow{i} E_U \xrightarrow{\pi} A_R \rightarrow 0$ be a non-abelian extension of Rota-Baxter algebras. A pair $(\beta, \alpha) \in \mathrm{Aut}(B_S) \times \mathrm{Aut} (A_R)$ of Rota-Baxter automorphisms is inducible if and only if the non-abelian $2$-cocycles $(\chi_{(\beta, \alpha)}, \psi_{(\beta, \alpha)}, \Phi_{(\beta, \alpha)})$ and $(\chi, \psi, \Phi)$ are equivalent (i.e. they corresponds to the same element in $H^2_{nab} (A_R, B_S)$).
\end{theorem}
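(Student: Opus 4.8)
The plan is to prove both implications by carefully tracking how a $B$-preserving automorphism of $E_U$ interacts with a section, mirroring the classical Wells argument but now accounting for the Rota-Baxter data $\Phi$.

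First I would prove the forward direction. Suppose $(\beta,\alpha)$ is inducible, so there exists $\gamma\in\mathrm{Aut}_B(E_U)$ with $\gamma|_B=\beta$ and $\overline{\gamma}=\alpha$. Fix a section $s$ of $\pi$ with associated $2$-cocycle $(\chi,\psi,\Phi)$. The key observation is that $\gamma\circ s\circ\alpha^{-1}$ is again a section of $\pi$: indeed $\pi(\gamma s\alpha^{-1}(a))=\overline{\gamma}(\alpha^{-1}(a))=a$ for all $a\in A$. Write $s' := \gamma s\alpha^{-1}$. By Theorem \ref{non-ab-thm} (more precisely, the section-independence argument inside its proof), the $2$-cocycle $(\chi^{s'},\psi^{s'},\Phi^{s'})$ associated to $s'$ is equivalent to $(\chi,\psi,\Phi)$ via the linear map $\phi := s-s' : A\to B$. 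So it suffices to show $(\chi^{s'},\psi^{s'},\Phi^{s'})=(\chi_{(\beta,\alpha)},\psi_{(\beta,\alpha)},\Phi_{(\beta,\alpha)})$. This is a direct computation: for instance
\[
\chi^{s'}(a,b)=s'(a)\cdot_E s'(b)-s'(a\cdot_A b)=\gamma\big(s\alpha^{-1}(a)\cdot_E s\alpha^{-1}(b)-s(\alpha^{-1}(a)\cdot_A\alpha^{-1}(b))\big)=\beta\big(\chi(\alpha^{-1}(a),\alpha^{-1}(b))\big),
\]
using that $\gamma$ is an algebra homomorphism, $\alpha^{-1}$ is an algebra homomorphism, and $\gamma|_B=\beta$; and similarly $\psi^{s'}(a,u)=\gamma(s\alpha^{-1}(a)\cdot_E\beta^{-1}(u))=\beta\psi(\alpha^{-1}(a),\beta^{-1}(u))$, and $\psi^{s'}(u,a)$ analogously. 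For $\Phi^{s'}$ one computes $\Phi^{s'}(a)=(Us'-s'R)(a)=\gamma\big(Us\alpha^{-1}(a)-sR\alpha^{-1}(a)\big)$, where I would use $U\gamma=\gamma U$ and $R\alpha^{-1}=\alpha^{-1}R$ (the latter because $\alpha\in\mathrm{Aut}(A_R)$) to rewrite this as $\beta\big((Us-sR)(\alpha^{-1}(a))\big)=\beta\Phi(\alpha^{-1}(a))=\Phi_{(\beta,\alpha)}(a)$. Hence $(\chi_{(\beta,\alpha)},\psi_{(\beta,\alpha)},\Phi_{(\beta,\alpha)})\overset{\phi}{\sim}(\chi,\psi,\Phi)$.

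For the converse, suppose $(\chi_{(\beta,\alpha)},\psi_{(\beta,\alpha)},\Phi_{(\beta,\alpha)})\overset{\phi}{\sim}(\chi,\psi,\Phi)$ for some linear $\phi:A\to B$. Identifying $E\cong A\oplus_{\chi,\psi}B$ and $U=U_\Phi$ via the section $s$ (as in the proof of Theorem \ref{non-ab-thm}), I would define $\gamma:A\oplus B\to A\oplus B$ by
\[
\gamma(a,u):=\big(\alpha(a),\ \beta(u)+\phi(\alpha(a))\big).
\]
Then I must check three things: (1) $\gamma$ is an algebra homomorphism for $\cdot_{\chi,\psi}$, which unwinds to exactly the first three equivalence relations in Definition \ref{defn-nab-2-co-rb}(ii) relating $(\chi,\psi)$ and $(\chi_{(\beta,\alpha)},\psi_{(\beta,\alpha)})$ after substituting $a\mapsto\alpha(a)$ etc.; (2) $\gamma\circ U_\Phi=U_\Phi\circ\gamma$, which reduces to the relation $\Phi(a)-\Phi_{(\beta,\alpha)}(a)$... wait, rather $\Phi_{(\beta,\alpha)}(a)-\Phi(a)=S\phi(a)-\phi R(a)$ evaluated appropriately, using $S\beta=\beta S$ and $R\alpha=\alpha R$; (3) $\gamma$ is bijective (clear, with inverse of the same shape) and $\gamma|_B=\beta\in\mathrm{Aut}(B_S)$, $\overline{\gamma}=\pi\gamma s=\alpha$. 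This exhibits $\gamma\in\mathrm{Aut}_B(E_U)$ with $\tau(\gamma)=(\beta,\alpha)$, so the pair is inducible.

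I expect the main obstacle to be purely bookkeeping rather than conceptual: one has to verify that $\gamma$ as defined above respects the Rota-Baxter operator, i.e. step (2) of the converse, since this is the genuinely new ingredient beyond the associative-algebra case of \cite{gouray}. The computation $\gamma U_\Phi(a,u)=(\alpha R(a),\ \beta S(u)+\beta\Phi(a)+\phi\alpha R(a))$ must be matched against $U_\Phi\gamma(a,u)=(R\alpha(a),\ S\beta(u)+S\phi\alpha(a)+\Phi(\alpha(a)))$, and equality of the second coordinates is precisely the $\Phi$-component of the equivalence relation (after replacing $a$ by $\alpha(a)$ in $\Phi_{(\beta,\alpha)}(\alpha(a))-\Phi(\alpha(a))=S\phi(\alpha(a))-\phi R(\alpha(a))$ and simplifying $\Phi_{(\beta,\alpha)}(\alpha(a))=\beta\Phi(a)$). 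The one subtlety worth flagging explicitly is that we must know $\alpha$ commutes with $R$ and $\beta$ commutes with $S$; both are immediate from $\alpha\in\mathrm{Aut}(A_R)$ and $\beta\in\mathrm{Aut}(B_S)$ and are used repeatedly in both directions. Everything else is a matter of substituting $\alpha^{-1}(a),\alpha^{-1}(b),\beta^{-1}(u)$ into the defining cocycle identities, exactly as in the remark preceding the theorem that showed $(\chi_{(\beta,\alpha)},\psi_{(\beta,\alpha)},\Phi_{(\beta,\alpha)})$ is itself a $2$-cocycle.
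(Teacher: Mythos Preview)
Your proposal is correct and follows essentially the same route as the paper: the converse is identical (define $\gamma(a,u)=(\alpha(a),\beta(u)+\phi\alpha(a))$ and verify it is a Rota--Baxter automorphism), and your forward direction is a mild repackaging of the paper's---where the paper sets $\phi:=\gamma s\alpha^{-1}-s$ and verifies the equivalence identities by hand, you recognise $s':=\gamma s\alpha^{-1}$ as a new section, compute that its associated cocycle is exactly $(\chi_{(\beta,\alpha)},\psi_{(\beta,\alpha)},\Phi_{(\beta,\alpha)})$, and then invoke the section-independence argument from Theorem~\ref{non-ab-thm}. The only cosmetic slip is the sign of $\phi$ (section-independence gives $\phi=s'-s$ for the direction you state), but since the equivalence relation is symmetric this is immaterial.
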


\begin{proof}
Suppose $(\beta, \alpha)$ is an inducible pair of Rota-Baxter automorphisms. Thus, there exists an element $\gamma \in \mathrm{Aut}_B (E_U)$ such that $\gamma|_B = \beta$ and $\pi \gamma s = \alpha$. For any $a \in A$, we observe that 
\begin{align*}
\pi \big( (\gamma s \alpha^{-1} - s)(a)  \big) = a - a = 0 \quad (\text{as } \pi s = \mathrm{id}_A).
\end{align*}
This shows that $(\gamma s \alpha^{-1} - s)(a) \in \mathrm{ker} (\pi) = \mathrm{im}(i) \cong B$. We define a map $\phi : A \rightarrow B$ by $\phi (a) := (\gamma s \alpha^{-1} -s )(a)$, for $a \in A$. Then for any $a \in A$ and $u \in B$, we have
\begin{align*}
\psi_{(\beta, \alpha)} (a, u) - \psi (a, u) =~& \beta \psi (\alpha^{-1}(a), \beta^{-1}(u)) - \psi (a, u) \\
=~& \beta \big( s\alpha^{-1} (a) \cdot_E \beta^{-1} (u) \big) - \big(  s(a) \cdot_E u \big) \\
=~& \big(  \gamma s \alpha^{-1} (a) \cdot_E u \big) - \big(  s(a) \cdot_E u \big) \\
=~& \big( \gamma s \alpha^{-1}(a) - s(a) \big) \cdot_B u = \phi (a) \cdot_B u.
\end{align*}
Similarly, $\psi_{(\beta, \alpha)} (u, a) - \psi (u, a) = u \cdot \phi (a).$ Moreover, for any $a, b \in A$,
\begin{align*}
&\chi_{(\beta, \alpha)} (a, b) - \chi (a, b) \\
&= \beta \chi \big( \alpha^{-1}(a), \alpha^{-1}(b) \big) - \chi (a, b)  \\
&= \beta \big(  s \alpha^{-1} (a) \cdot_E s \alpha^{-1}(b) - s (\alpha^{-1}(a) \cdot_A \alpha^{-1}(b))   \big) - \big( s(a) \cdot_E s(b) - s (a \cdot_A b)   \big) \\
&= \beta \big(  s \alpha^{-1}(a) \cdot_E \beta^{-1} (\gamma s \alpha^{-1} - s)(b)  \big) + \beta \big( \beta^{-1} (\gamma s \alpha^{-1} - s)(a) \cdot_E s \alpha^{-1}(b)  \big) - \gamma s \alpha^{-1} (a \cdot_A b) + s (a \cdot_A b) \\
& \quad - \gamma s \alpha^{-1} (a) \cdot_E \gamma s \alpha^{-1} (b) + \gamma s \alpha^{-1} (a) \cdot_E s (b) + s (a) \cdot_E \gamma s \alpha^{-1} (b) - s(a) \cdot_E s(b) \\
& \qquad \qquad \qquad  \qquad \qquad \qquad (\text{by adding and subtracting some terms}) \\
&= \beta \psi (\alpha^{-1}(a), \beta^{-1} \phi (b)) + \beta \psi (\beta^{-1} \phi (a), \alpha^{-1}(b)) - (\gamma s \alpha^{-1} - s) (a \cdot_A b) - (\gamma s \alpha^{-1} - s)(a) \cdot_B (\gamma s \alpha^{-1} - s)(b) \\
&= \psi_{(\beta, \alpha)} (a, \phi(b)) + \psi_{(\beta, \alpha)} (\phi (a), b) - \phi (a \cdot_A b) - \phi (a) \cdot_B \phi (b). 
\end{align*}
Finally, 
\begin{align*}
\Phi_{(\beta, \alpha)} (a) - \Phi (a) =~& \beta \Phi (\alpha^{-1} (a)) - \Phi (a) \\
=~& \beta (Us -s R) \alpha^{-1} (a) - (Us -sR)(a) \\
=~& \gamma (Us -s R) \alpha^{-1} (a) - (Us -sR)(a) \\
=~& (U \gamma s - \gamma s R) \alpha^{-1} (a) - (Us -sR)(a) \\
=~& U (\gamma s \alpha^{-1} - s)(a) - (\gamma s \alpha^{-1} - s) R(a) \\
=~& S (\gamma s \alpha^{-1} - s)(a) - (\gamma s \alpha^{-1} - s) R(a) = (S \phi - \phi R)(a).
\end{align*}
Thus, it follows from Definition \ref{defin-nab}(ii) that $(\chi_{(\beta, \alpha)}, \psi_{(\beta, \alpha)}, \Phi_{(\beta, \alpha)}) \overset{\phi}{\sim} (\chi, \psi, \Phi)$.

\medskip

Conversely, suppose that the non-abelian $2$-cocycles $(\chi_{(\beta, \alpha)}, \psi_{(\beta, \alpha)}, \Phi_{(\beta, \alpha)})$ and $(\chi, \psi, \Phi)$ are equivalent, and the equivalence is given by a map $\phi : A \rightarrow B$. Note that $s$ is a section of the map $\pi$ implies that $E \cong A \oplus B$ as a {\bf k}-module. Thus, an element $e \in E$ can be uniquely written as $e = (a, u)$, for some $a \in A$ and $u \in B$. We now define a map $\gamma : E \rightarrow E$ by
\begin{align*}
\gamma (e) = (\alpha (a), \beta (u) + \phi \alpha (a)), \text{ for } e = (a,u). 
\end{align*}
If for any $e = (a, u)$, the image $\gamma (e) = (\alpha (a), \beta (u) + \phi \alpha (a)) = 0$ then it follows that $\alpha (a) = 0$ (which implies that $a=0$) and $\beta (u) + \phi \alpha (a) = 0$. Thus, we have $\beta (u) = 0$ which implies that $u = 0$. Therefore, $e = (a, u) = 0$. This shows that $\gamma$ is injective. The map $\gamma$ is also surjective, as for any $e = (a, u) \in E$, we consider the element $e' = \big( \alpha^{-1} (a) , \beta^{-1} (u) - \beta^{-1} \phi (a) \big) \in E$ and observe that
\begin{align*}
\gamma (e' ) = \gamma \big( \alpha^{-1} (a) , \beta^{-1} (u) - \beta^{-1} \phi (a) \big) = (a, u) = e.
\end{align*}
Next, for any $e = (a,u)$ and $f = (b,v)$ of elements in $E$, we have
\begin{align*}
 \gamma (e) \cdot_E \gamma (f) =~& \gamma (a,u) \cdot_E \gamma (b, v) \\
 =~& \big(  \alpha (a), \beta(u) + \phi \alpha (a) \big) \cdot_E \big(  \alpha (b), \beta(v) + \phi \alpha (b) \big)\\
 =~& \big( \alpha(a) \cdot_A \alpha (b), ~ \underbrace{\alpha (a) \cdot_E \beta (v)}_{(A)} + \underbrace{\alpha (a) \cdot_E \phi \alpha (b)}_{(B)} + \underbrace{\beta (u) \cdot_E \alpha (b)}_{(C)} + \underbrace{\phi \alpha (a) \cdot_E \alpha (b) + \chi (\alpha (a), \alpha (b))}_{(B)} \\
 & \qquad \qquad \qquad + \underbrace{\beta(u) \cdot_B \beta (v)}_{(D)} + \underbrace{\beta(u) \cdot_B \phi \alpha (b)}_{(C)} + \underbrace{\phi \alpha (a) \cdot_B \beta(v)}_{(A)} + \underbrace{\phi \alpha(a) \cdot_B \phi \alpha (b)}_{(B)} \big)\\
 =~& \big( \alpha (a \cdot_A b),~ \underbrace{\beta (a \cdot_E v)}_{(A)} + \underbrace{\beta (\chi (a,b)) + \phi \alpha (a \cdot_A b)}_{(B)} + \underbrace{\beta (u \cdot_E b)}_{(C)} + \underbrace{\beta (u \cdot_B v)}_{(D)} \big) \\
 =~& \gamma \big(  a \cdot_A b, ~ a \cdot_E v + u \cdot_E b + \chi (a, b) + u \cdot_B v \big) \\
 =~& \gamma \big( (a,u) \cdot_E (b, v) \big) = \gamma (e \cdot_E f).
\end{align*}
which shows that $\gamma : E \rightarrow E$ is an algebra morphism. Moreover, 
\begin{align*}
(\gamma \circ U) (e) =~& (\gamma \circ U)(a,u) \\
=~& \gamma \big( R(a), S (u) + \Phi (a) \big) \\
=~& \big( \alpha R(a), \beta (S(u) + \Phi (a)) + \phi \alpha (R(a))  \big) \\
=~& \big( R \alpha (a), S \beta (u) + S \phi \alpha (a) + \Phi \alpha (a)  \big) \quad (\text{as } \Phi_{(\beta , \alpha)} - \Phi = S \phi - \phi R)\\
=~& U \big( \alpha (a), \beta (u) + \phi \alpha (a) \big)\\
=~& (U \circ \gamma)(a,u) = (U \circ \gamma)(e).
\end{align*}
As a summary, we get that $\gamma \in \mathrm{Aut}(E_U)$. Finally, from the definition of $\gamma$, it is easy to see that $\gamma|_B = \beta$ and $\overline{\gamma} = \pi \gamma s = \alpha$. Hence $\tau (\gamma) = (\gamma|_B, \overline{\gamma}) = (\beta, \alpha)$, which implies that the pair $(\beta, \alpha)$ is inducible.
 \end{proof}
 
\medskip
 
In the following, we will interpret the above theorem in terms of the Wells map in the context of Rota-Baxter algebras.
Let $0 \rightarrow B_S \xrightarrow{i} E_U \xrightarrow{\pi} A_R \rightarrow 0$ be a non-abelian extension of Rota-Baxter algebras. For any fixed section $s$, let $(\chi, \psi, \Phi)$ be the corresponding non-abelian $2$-cocycle. We define a map $\mathcal{W} : \mathrm{Aut}(B_S) \times \mathrm{Aut}(A_R) \rightarrow H^2_{nab}(A_R, B_S)$ by
\begin{align*}
\mathcal{W} ((\beta, \alpha)) := [(\chi_{(\beta, \alpha)}, \psi_{(\beta, \alpha)}, \Phi_{(\beta, \alpha)})- (\chi, \psi, \Phi) ].
\end{align*} 
Note that $\mathcal{W}$ is only a set map, not necessarily a group homomorphism. The map $\mathcal{W}$ is called the {\bf Wells map}. It is easy to see that the Wells map $\mathcal{W}$ doesn't depend on the choice of the section $s$.

Note that there is a natural action of the group $\mathrm{Aut}(B_S) \times \mathrm{Aut}(A_R)$ on the space $H^2_{nab}(A_R, B_S)$ given by
\begin{align*}
(\beta, \alpha) \cdot [(\chi, \psi, \Phi)] = [(\chi_{(\beta, \alpha)}, \psi_{(\beta, \alpha)}, \Phi_{(\beta, \alpha)})],
\end{align*}
for any $(\beta, \alpha) \in \mathrm{Aut}(B_S) \times \mathrm{Aut}(A_R)$ and $[(\chi, \psi, \Phi)] \in H^2_{nab}(A_R, B_S)$. With this action, the Wells map $\mathcal{W}$ is given by $\mathcal{W}((\beta, \alpha)) = (\beta, \alpha) \cdot [(\chi, \psi, \Phi)] - [(\chi, \psi, \Phi)]$, where $[(\chi, \psi, \Phi)] \in H^2_{nab} (A_R,B_S)$ is the element corresponding to the extension. This shows that the Wells map is simply a principal crossed homomorphism in the group cohomology of $\mathrm{Aut}(B_S) \times \mathrm{Aut}(A_R)$ with values in $H^2_{nab}(A_R, B_S)$.

\medskip

With the definition of the Wells map, Theorem \ref{thm-inducibility} can be rephrased as follows.

\begin{theorem}\label{thm-inducibility-2}
A pair $(\beta, \alpha) \in \mathrm{Aut}(B_S) \times \mathrm{Aut} (A_R)$ of Rota-Baxter automorphisms is inducible if and only if $\mathcal{W} ((\beta, \alpha)) = 0$. In other words, $\mathcal{W} ((\beta, \alpha))$ is an obstruction for the inducibility of the pair $(\beta, \alpha)$.
\end{theorem}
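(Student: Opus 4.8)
The statement is a direct translation of Theorem \ref{thm-inducibility} into the language of the Wells map, so the plan is essentially to unwind the definitions rather than to compute anything new. First I would recall that, by the construction of $\mathcal{W}$ and the description of the non-abelian cohomology $H^2_{nab}(A_R, B_S)$ as the set of equivalence classes of non-abelian $2$-cocycles (Definition \ref{defn-nab-2-co-rb}), the equality $\mathcal{W}((\beta,\alpha)) = 0$ holds if and only if the two non-abelian $2$-cocycles $(\chi_{(\beta,\alpha)}, \psi_{(\beta,\alpha)}, \Phi_{(\beta,\alpha)})$ and $(\chi,\psi,\Phi)$ determine the same element of $H^2_{nab}(A_R, B_S)$, that is, if and only if there exists a linear map $\phi : A \to B$ witnessing $(\chi_{(\beta,\alpha)}, \psi_{(\beta,\alpha)}, \Phi_{(\beta,\alpha)}) \overset{\phi}{\sim} (\chi,\psi,\Phi)$ in the sense of Definition \ref{defn-nab-2-co-rb}(ii).

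With this identification in hand, I would simply invoke Theorem \ref{thm-inducibility}, which already establishes that the pair $(\beta,\alpha)$ is inducible precisely when these two non-abelian $2$-cocycles are equivalent. Chaining the two biconditionals then yields: $(\beta,\alpha)$ is inducible $\iff$ $(\chi_{(\beta,\alpha)}, \psi_{(\beta,\alpha)}, \Phi_{(\beta,\alpha)})$ and $(\chi,\psi,\Phi)$ are equivalent $\iff$ $\mathcal{W}((\beta,\alpha)) = 0$. The closing assertion of the theorem, that $\mathcal{W}((\beta,\alpha))$ is an obstruction to inducibility, is then just a reformulation of this chain.

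The only point deserving a line of care, and the place where a misreading could creep in, is the interpretation of the formal ``difference'' $(\chi_{(\beta,\alpha)}, \psi_{(\beta,\alpha)}, \Phi_{(\beta,\alpha)}) - (\chi,\psi,\Phi)$ used to define $\mathcal{W}$: since $H^2_{nab}(A_R, B_S)$ is a pointed set rather than a group, ``$\mathcal{W}((\beta,\alpha)) = 0$'' must be read as the statement that the two triples represent the same (base-point) class in $H^2_{nab}(A_R, B_S)$, which is exactly the hypothesis appearing in Theorem \ref{thm-inducibility}. I would also remark that $\mathcal{W}$ does not depend on the chosen section $s$ (as already noted before the statement), so the criterion $\mathcal{W}((\beta,\alpha))=0$ is well posed; no further computation is needed beyond what was carried out in the proof of Theorem \ref{thm-inducibility}.
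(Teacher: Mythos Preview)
Your proposal is correct and matches the paper's own treatment: the paper states this theorem as a direct rephrasing of Theorem~\ref{thm-inducibility} in terms of the Wells map and gives no separate proof. Your added remark on reading the formal ``difference'' in the pointed set $H^2_{nab}(A_R,B_S)$ is a sensible clarification that the paper leaves implicit.
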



\medskip

\medskip

\noindent {\bf The fundamental exact sequence of Wells and some generalizations.} Given a non-abelian extension $0 \rightarrow B_S \xrightarrow{i} E_U \xrightarrow{\pi} A_R \rightarrow 0$ of Rota-Baxter algebras, here we show that the Wells map $\mathcal{W}$ fits into an exact sequence. This is analogue to the fundamental sequence of Wells in the context of Rota-Baxter algebras. We will further construct two generalizations and find their role in extending a Rota-Baxter automorphism $\beta \in \mathrm{Aut}(B_S)$ and lifting a Rota-Baxter automorphism $\alpha \in \mathrm{Aut}(A_R)$ to an automorphism in $\mathrm{Aut}(E_U)$.

\medskip

Let $\mathrm{Aut}_B^{B,A}(E_U) \subset \mathrm{Aut}_B (E_U)$ be the subgroup defined by 
\begin{align*}
\mathrm{Aut}_B^{B,A} (E_U) = \{ \gamma \in \mathrm{Aut}_B (E_U) ~|~ \tau (\gamma) = (\mathrm{id}_B, \mathrm{id}_A) \}.
\end{align*}

\begin{theorem}\label{thm-wells}(Fundamental sequence of Wells) There is an exact sequence
\begin{align*}
1 \rightarrow \mathrm{Aut}_B^{B,A} (E_U) \xrightarrow{\iota} \mathrm{Aut}_B (E_U) \xrightarrow{\tau} \mathrm{Aut}(B_S) \times \mathrm{Aut}(A_R) \xrightarrow{\mathcal{W}} H^2_{nab}(A_R,B_S).
\end{align*}
\end{theorem}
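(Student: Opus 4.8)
The plan is to verify exactness at each of the three non-trivial spots of the sequence
\[
1 \rightarrow \mathrm{Aut}_B^{B,A}(E_U) \xrightarrow{\iota} \mathrm{Aut}_B(E_U) \xrightarrow{\tau} \mathrm{Aut}(B_S) \times \mathrm{Aut}(A_R) \xrightarrow{\mathcal{W}} H^2_{nab}(A_R,B_S),
\]
where $\iota$ is the inclusion. Exactness at $\mathrm{Aut}_B^{B,A}(E_U)$ is immediate since $\iota$ is injective by construction. Exactness at $\mathrm{Aut}_B(E_U)$ amounts to the statement $\ker(\tau) = \mathrm{im}(\iota) = \mathrm{Aut}_B^{B,A}(E_U)$, which is literally the defining condition $\tau(\gamma) = (\mathrm{id}_B, \mathrm{id}_A)$ of the subgroup; so there is nothing to prove beyond unwinding definitions.

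The substantive point is exactness at $\mathrm{Aut}(B_S) \times \mathrm{Aut}(A_R)$, i.e. $\mathrm{im}(\tau) = \mathcal{W}^{-1}(0)$. For the inclusion $\mathrm{im}(\tau) \subseteq \mathcal{W}^{-1}(0)$: if $(\beta, \alpha) = \tau(\gamma)$ for some $\gamma \in \mathrm{Aut}_B(E_U)$, then $(\beta, \alpha)$ is inducible, so by Theorem \ref{thm-inducibility} (rephrased as Theorem \ref{thm-inducibility-2}) we get $\mathcal{W}((\beta,\alpha)) = 0$. For the reverse inclusion $\mathcal{W}^{-1}(0) \subseteq \mathrm{im}(\tau)$: if $\mathcal{W}((\beta,\alpha)) = 0$, then again by Theorem \ref{thm-inducibility-2} the pair $(\beta,\alpha)$ is inducible, meaning precisely that $(\beta,\alpha) \in \mathrm{im}(\tau)$. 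Thus exactness at this spot is an essentially immediate consequence of the inducibility criterion already established, once one observes that $\mathcal{W}$ is well-defined independently of the section (noted in the text preceding the statement).

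Since every step reduces to either a definitional unwinding or a direct application of Theorem \ref{thm-inducibility-2}, there is no real obstacle here; the only mild care needed is to confirm that $\tau$ is indeed a group homomorphism (already checked in the discussion above Theorem \ref{thm-inducibility}) so that the sequence is a sequence of groups and group homomorphisms up to the point where $\mathcal{W}$ enters — beyond that point $\mathcal{W}$ is only a map of pointed sets, which is all that exactness of the sequence requires at its final term. I would therefore present the proof as three short paragraphs, one per spot, citing Theorem \ref{thm-inducibility-2} for the crucial middle-to-right exactness and the definition of $\mathrm{Aut}_B^{B,A}(E_U)$ for the left exactness.
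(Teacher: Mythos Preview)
Your proposal is correct and follows essentially the same approach as the paper's proof: exactness at the first two spots is unwound directly from the definitions of $\iota$ and $\mathrm{Aut}_B^{B,A}(E_U)$, and exactness at the third spot is obtained by invoking Theorem~\ref{thm-inducibility-2} in both directions. Your additional remark that $\mathcal{W}$ is only a map of pointed sets (so exactness at the final term is in that category) is a helpful clarification that the paper leaves implicit.
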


\begin{proof}
Note that the map $\iota : \mathrm{Aut}_B^{B,A} (E_U) \rightarrow \mathrm{Aut}_B (E_U)$ is the inclusion map. Hence the above sequence is exact at the first term.

Next, we show that the sequence is exact at the second term. Take an element $\gamma \in \text{ker} (\tau)$. Thus, we have
\begin{align*}
\tau (\gamma) = (\gamma|_B, \overline{\gamma}) = (\mathrm{id}_B, \mathrm{id}_A).
\end{align*}
This shows that $\gamma \in \mathrm{Aut}_B^{B,A} (E_U) \cong \mathrm{im} (\iota)$. Conversely, if $\gamma \in \mathrm{Aut}_B^{B,A}(E_U) \cong \mathrm{im}(\iota)$, then $\gamma \in \mathrm{ker}(\tau)$. Therefore, we have $\mathrm{ker}(\tau) = \mathrm{im}(\iota)$ which shows that the sequence is exact at the second term.

Next, we take an element $(\beta, \alpha) \in \mathrm{ker}(\mathcal{W})$. Therefore, it follows from Theorem \ref{thm-inducibility-2} that the pair $(\beta, \alpha)$ inducible. In other words, there exists an element $\gamma \in \mathrm{Aut}_B (E_U)$ such that $\tau (\gamma) = (\beta, \alpha),$ which shows that $(\beta, \alpha) \in \mathrm{im}(\tau)$. Conversely, if a pair $(\beta, \alpha) \in \mathrm{im} (\tau)$ then $(\beta, \alpha)$ is inducible. Hence by Theorem \ref{thm-inducibility-2}, we have $\mathcal{W}((\beta, \alpha)) =0$, which shows that $(\beta, \alpha) \in \text{ker}(\mathcal{W}).$ Therefore, we have $\text{ker}(\mathcal{W}) = \mathrm{im}(\tau)$. This proves that the sequence is exact at the third term. This completes the proof.
\end{proof}

\medskip



 We have seen earlier that the image of the Wells map can be seen as an obstruction for the inducibility of a pair of Rota-Baxter automorphisms. Besides the question about inducibility, there are some questions of interest. For example, one may ask the followings:
 
 \medskip

- When a Rota-Baxter automorphism $\alpha \in \mathrm{Aut}(A_R)$ can be lifted to an automorphism in $\mathrm{Aut}(E_U)$ fixing $B$ pointwise?

\medskip

- When a Rota-Baxter automorphism $\beta \in \mathrm{Aut}(B_S)$ can be extended to an automorphism in $\mathrm{Aut}(E_U)$ inducing the identity map on $A$?

\medskip

To answer these questions, we first introduce two subgroups $\mathrm{Aut}_B^B (E_U)$ and $\mathrm{Aut}_B^A (E_U)$ of the group $\mathrm{Aut}_B (E_U)$ as
\begin{align*}
\mathrm{Aut}_B^B (E_U) =~& \{ \gamma \in \mathrm{Aut}_B (E_U) ~|~ \gamma|_B = \mathrm{id}_B \},\\
\mathrm{Aut}_B^A (E_U )=~& \{ \gamma \in \mathrm{Aut}_B (E_U) ~|~ \overline{\gamma} = \mathrm{id}_A \}.
\end{align*}
Then there are obvious maps $\tau_1 : \mathrm{Aut}_B^B (E_U) \rightarrow \mathrm{Aut}(A_R)$ and $\tau_2 : \mathrm{Aut}_B^A (E_U) \rightarrow \mathrm{Aut}(B_S)$ given by
\begin{align*}
\tau_1 (\gamma) =~& \overline{\gamma}, \text{ for } \gamma \in \mathrm{Aut}_B^B (E_U), \\
\tau_2 (\gamma) =~& \gamma|_B, \text{ for } \gamma \in \mathrm{Aut}_B^A (E_U).
\end{align*}
We also define maps $\mathcal{W}_A : \mathrm{Aut}(A_R) \rightarrow H^2_{nab} (A_R, B_S)$ and $\mathcal{W}_B : \mathrm{Aut}(B_S) \rightarrow H^2_{nab} (A_R, B_S)$ by
\begin{align*}
\mathcal{W}_A (\alpha) =~& \mathcal{W} ((\mathrm{id}_B, \alpha)) =   [ (\chi_{(\mathrm{id}_B, \alpha)}, \psi_{(\mathrm{id}_B, \alpha)}, \Phi_{(\mathrm{id}_B, \alpha)}) - (\chi, \psi, \Phi)], \\
\mathcal{W}_B (\beta) =~& \mathcal{W} ((\beta, \mathrm{id}_A)) = [(\chi_{(\beta, \mathrm{id}_A)}, \psi_{(\beta, \mathrm{id}_A)}, \Phi_{(\beta, \mathrm{id}_A)}) - (\chi, \psi, \Phi)].
\end{align*}

In the following, we will construct two short exact sequences generalizing the fundamental sequence of Wells. The proofs of exactness for both the sequences are similar to the proof of Theorem \ref{thm-wells}. Hence we will not repeat it here.

\begin{theorem}\label{2thm-wells}
Let $0 \rightarrow B_S \xrightarrow{i} E_U \xrightarrow{\pi} A_R \rightarrow 0$ be a non-abelian extension of Rota-Baxter algebras. Then there are short exact sequences
\begin{align*}
1 \rightarrow \mathrm{Aut}_B^{B,A} (E_U) \xrightarrow{\iota} \mathrm{Aut}_B^B (E_U) \xrightarrow{\tau_1} \mathrm{Aut} (A_R) \xrightarrow{\mathcal{W}_A} H^2_{nab} (A_R, B_S), \\
1 \rightarrow \mathrm{Aut}_B^{B,A} (E_U) \xrightarrow{\iota} \mathrm{Aut}_B^A (E_U) \xrightarrow{\tau_2} \mathrm{Aut} (B_S) \xrightarrow{\mathcal{W}_B} H^2_{nab} (A_R, B_S).
\end{align*}
\end{theorem}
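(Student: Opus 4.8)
The plan is to mimic the proof of Theorem \ref{thm-wells} verbatim, restricting attention to the appropriate subgroups. The key point is that the maps $\tau_1$ and $\tau_2$ are the obvious restrictions of $\tau$, and the maps $\mathcal{W}_A$ and $\mathcal{W}_B$ are the restrictions of $\mathcal{W}$ to the subgroups $\{\mathrm{id}_B\} \times \mathrm{Aut}(A_R)$ and $\mathrm{Aut}(B_S) \times \{\mathrm{id}_A\}$ respectively (identified with $\mathrm{Aut}(A_R)$ and $\mathrm{Aut}(B_S)$). So each of the two sequences has exactly the same three exactness claims to verify as in Theorem \ref{thm-wells}, and the arguments are line-for-line the same once one checks the two additional facts recorded below.

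First I would handle exactness at $\mathrm{Aut}_B^B(E_U)$ (resp. $\mathrm{Aut}_B^A(E_U)$). For the first sequence: $\iota$ is the inclusion, so it is injective, giving exactness at $\mathrm{Aut}_B^{B,A}(E_U)$. For exactness at $\mathrm{Aut}_B^B(E_U)$, observe that $\gamma \in \ker(\tau_1)$ means $\gamma \in \mathrm{Aut}_B^B(E_U)$ and $\overline{\gamma} = \mathrm{id}_A$; combined with $\gamma|_B = \mathrm{id}_B$ (which holds since $\gamma \in \mathrm{Aut}_B^B(E_U)$), this says exactly $\tau(\gamma) = (\mathrm{id}_B, \mathrm{id}_A)$, i.e. $\gamma \in \mathrm{Aut}_B^{B,A}(E_U) = \mathrm{im}(\iota)$; the converse inclusion is clear. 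For the second sequence the roles of $B$ and $A$ are swapped: $\gamma \in \ker(\tau_2)$ means $\gamma|_B = \mathrm{id}_B$ together with $\overline{\gamma} = \mathrm{id}_A$ (the latter because $\gamma \in \mathrm{Aut}_B^A(E_U)$), again giving $\mathrm{im}(\iota)$.

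Next I would verify exactness at $\mathrm{Aut}(A_R)$ for the first sequence. If $\alpha \in \ker(\mathcal{W}_A)$, then $\mathcal{W}((\mathrm{id}_B, \alpha)) = 0$, so by Theorem \ref{thm-inducibility-2} the pair $(\mathrm{id}_B, \alpha)$ is inducible: there is $\gamma \in \mathrm{Aut}_B(E_U)$ with $\gamma|_B = \mathrm{id}_B$ and $\overline{\gamma} = \alpha$. Since $\gamma|_B = \mathrm{id}_B$ we have $\gamma \in \mathrm{Aut}_B^B(E_U)$, and $\tau_1(\gamma) = \overline{\gamma} = \alpha$, so $\alpha \in \mathrm{im}(\tau_1)$. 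Conversely, if $\alpha = \tau_1(\gamma) = \overline{\gamma}$ for some $\gamma \in \mathrm{Aut}_B^B(E_U)$, then $\tau(\gamma) = (\mathrm{id}_B, \alpha)$, so $(\mathrm{id}_B, \alpha)$ is inducible and hence $\mathcal{W}_A(\alpha) = \mathcal{W}((\mathrm{id}_B,\alpha)) = 0$. The argument for exactness at $\mathrm{Aut}(B_S)$ in the second sequence is identical with $(\mathrm{id}_B, \alpha)$ replaced by $(\beta, \mathrm{id}_A)$, $\tau_1$ by $\tau_2$, $\overline{\gamma}$ by $\gamma|_B$, and $\mathrm{Aut}_B^B(E_U)$ by $\mathrm{Aut}_B^A(E_U)$.

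The only genuinely new bookkeeping — and the place I expect the mildest friction — is confirming that $\tau_1, \tau_2$ are well-defined group homomorphisms into $\mathrm{Aut}(A_R)$, $\mathrm{Aut}(B_S)$ respectively (this follows since $\tau$ is a homomorphism and $\mathrm{Aut}_B^B(E_U)$, $\mathrm{Aut}_B^A(E_U)$ are subgroups of $\mathrm{Aut}_B(E_U)$ on which the respective coordinate of $\tau$ is trivial), and that $\mathcal{W}_A, \mathcal{W}_B$ really are the restrictions of $\mathcal{W}$ so that Theorem \ref{thm-inducibility-2} applies with $\beta = \mathrm{id}_B$ (resp. $\alpha = \mathrm{id}_A$); both are immediate from the definitions. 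Since the authors have already noted these proofs are "similar to the proof of Theorem \ref{thm-wells}" and declined to repeat them, I would present this as a short remark spelling out the two substitutions and pointing to the proof of Theorem \ref{thm-wells}, rather than rewriting the three exactness arguments in full.
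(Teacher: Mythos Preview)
Your proposal is correct and matches the paper's own treatment: the paper gives no separate proof of Theorem \ref{2thm-wells}, merely noting that ``the proofs of exactness for both the sequences are similar to the proof of Theorem \ref{thm-wells},'' and your write-up makes explicit precisely the restrictions and bookkeeping needed to carry that proof over.
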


The following result is straightforward.

\begin{theorem}\label{2thm-wells-thm}
(i) A Rota-Baxter automorphism $\alpha \in \mathrm{Aut}(A_R)$ can be lifted to an automorphism in $\mathrm{Aut}(E_U)$ fixing $B$ pointwise if and only if $\mathcal{W}_A (\alpha) = 0.$

(ii) A Rota-Baxter automorphism $\beta \in \mathrm{Aut}(B_S)$ can be extended to an automorphism in $\mathrm{Aut}(E_U)$ inducing the identity on $A$ if and only if $\mathcal{W}_B (\beta) = 0.$
\end{theorem}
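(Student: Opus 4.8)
The plan is to deduce Theorem \ref{2thm-wells-thm} directly from the exact sequences in Theorem \ref{2thm-wells}, mimicking the way Theorem \ref{thm-inducibility-2} plus Theorem \ref{thm-wells} governs the general inducibility problem. The key observation is that ``$\alpha \in \mathrm{Aut}(A_R)$ lifts to an automorphism in $\mathrm{Aut}(E_U)$ fixing $B$ pointwise'' is exactly the assertion that $\alpha$ lies in the image of $\tau_1 : \mathrm{Aut}_B^B(E_U) \rightarrow \mathrm{Aut}(A_R)$, and similarly ``$\beta \in \mathrm{Aut}(B_S)$ extends to an automorphism in $\mathrm{Aut}(E_U)$ inducing the identity on $A$'' is exactly the assertion that $\beta$ lies in the image of $\tau_2 : \mathrm{Aut}_B^A(E_U) \rightarrow \mathrm{Aut}(B_S)$. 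So both parts reduce to computing the images of $\tau_1$ and $\tau_2$.

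For part (i), I would argue as follows. A lift of $\alpha$ to $\gamma \in \mathrm{Aut}(E_U)$ with $\gamma|_B = \mathrm{id}_B$ is in particular an element of $\mathrm{Aut}_B(E_U)$ (since $\gamma|_B \subset B$), hence an element of $\mathrm{Aut}_B^B(E_U)$, and $\tau(\gamma) = (\gamma|_B, \overline{\gamma}) = (\mathrm{id}_B, \alpha)$. Conversely, if $\alpha \in \mathrm{im}(\tau_1)$, pick $\gamma \in \mathrm{Aut}_B^B(E_U)$ with $\overline{\gamma} = \alpha$; then $\gamma$ lifts $\alpha$ and fixes $B$ pointwise. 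Thus $\alpha$ lifts in the desired sense if and only if $\alpha \in \mathrm{im}(\tau_1)$. By exactness of the first sequence in Theorem \ref{2thm-wells} at the term $\mathrm{Aut}(A_R)$, we have $\mathrm{im}(\tau_1) = \ker(\mathcal{W}_A)$, so $\alpha$ lifts if and only if $\mathcal{W}_A(\alpha) = 0$. Part (ii) is entirely parallel: a Rota-Baxter extension of $\beta$ inducing $\mathrm{id}_A$ is precisely an element $\gamma \in \mathrm{Aut}_B^A(E_U)$ with $\gamma|_B = \beta$, i.e.\ an element of $\mathrm{im}(\tau_2)$, and exactness of the second sequence at $\mathrm{Aut}(B_S)$ gives $\mathrm{im}(\tau_2) = \ker(\mathcal{W}_B)$.

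One small point I would make explicit in the write-up is why the membership conditions match up exactly: if $\gamma \in \mathrm{Aut}(E_U)$ satisfies $\gamma|_B = \mathrm{id}_B$ then automatically $\gamma(B) \subseteq B$, so $\gamma \in \mathrm{Aut}_B(E_U)$ and the definition of $\overline{\gamma} = \pi\gamma s$ makes sense and is independent of $s$ (as noted in Section \ref{sec-4}); and ``$\gamma$ induces $\mathrm{id}_A$'' means precisely $\overline{\gamma} = \mathrm{id}_A$. So there is no gap between the informal phrasing in the statement and the group-theoretic image conditions.

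Since Theorem \ref{2thm-wells} is granted (its proof being asserted to be identical to that of Theorem \ref{thm-wells}), there is no real obstacle here: the argument is a formal unwinding of definitions plus exactness at the relevant terms. The only thing that requires a moment's care is making sure the reduction of the word ``lift''/``extend'' to ``lies in the image of $\tau_1$''/``$\tau_2$'' is stated cleanly, which is why the theorem is labelled ``straightforward.'' I would keep the proof to a few lines, treating (i) in detail and remarking that (ii) follows by the same reasoning with the roles of $A$ and $B$, of $\tau_1$ and $\tau_2$, and of $\mathcal{W}_A$ and $\mathcal{W}_B$ interchanged.
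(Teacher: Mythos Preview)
Your proposal is correct and matches the paper's intent: the paper does not give a proof at all, merely declaring the result ``straightforward,'' and your argument---identifying the lifting/extending conditions with membership in $\mathrm{im}(\tau_1)$ and $\mathrm{im}(\tau_2)$, then invoking exactness of the sequences in Theorem \ref{2thm-wells}---is exactly the unwinding the authors have in mind.
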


\section{Abelian extensions of Rota-Baxter algebras}\label{sec-5}
In this section, we show that the results of the previous sections fit with the abelian extensions of Rota-Baxter algebras. In particular, we classify the equivalence classes of abelian extensions in terms of the ordinary cohomology of Rota-Baxter algebras. We also discuss the inducibility problem and the fundamental sequence of Wells in the context of abelian extensions.

Let $A_R$ and $B_S$ be two Rota-Baxter algebras in which the associative product on $B$ is trivial. An {\bf abelian extension} of $A_R$ by $B_S$ is a short exact sequence 
\begin{align}
\xymatrix{
0 \ar[r] & B_S \ar[r]^i & E_U \ar[r]^\pi & A_R \ar[r] & 0
}
\end{align}
of Rota-Baxter algebras. The notion of equivalence between two abelian extensions of $A_R$ by $B_S$ can be defined as of Definition \ref{defin-nab}(ii).

For any section $s: A \rightarrow E$ of the map $\pi$, we define a map $\psi^s : (A \otimes B) \oplus (B \otimes A) \rightarrow B$ by
\begin{align*}
\psi^s (a,u) = s(a) \cdot_E u   ~~~ \text{ and } ~~~ \psi^s (u,a) = u \cdot_E s(a), \text{ for } a \in A, u \in B.
\end{align*}
Then it can be seen that $\psi^s$ gives rise to a Rota-Baxter bimodule structure on $B_S$ over the Rota-Baxter algebra $A_R$. Moreover, it doesn't depend on the choice of $s$ \cite{DasSKbimod}. This is called the induced Rota-Baxter bimodule.




Let $A_R$ be a Rota-Baxter algebra and $B_S$ be a Rota-Baxter bimodule over it. We consider $B_S$ as a Rota-Baxter algebra with the trivial associative product on $B$. We denote by $\mathrm{AbExt} (A_R, B_S)$ the set of all equivalence classes of abelian extensions of $A_R$ by $B_S$ so that the induced Rota-Baxter bimodule on $B_S$ coincides with the given one. In \cite{DasSKbimod} the authors parametrize $\mathrm{AbExt} (A_R, B_S)$ by the second (ordinary) cohomology of the Rota-Baxter algebra $A_R$ with coefficients in $B_S$. To recover this result, we first recall the (ordinary) cohomology of Rota-Baxter algebras.

Let $A_R$ be a Rota-Baxter algebra and $B_S$ be a Rota-Baxter bimodule over it. First consider the Hochschild cochain complex $\{ C^\bullet (A, B), \delta_\mathrm{Hoch} \}$ of the algebra $A$ with coefficients in the $A$-bimodule $B$, where $C^{n \geq 0} (A,B) = \mathrm{Hom}(A^{\otimes n}, B)$ and
\begin{align*}
(\delta_\mathrm{Hoch} f) (a_1, \ldots, a_{n+1} ) =~& a_1 \cdot f(a_2, \ldots, a_{n+1} ) + (-1)^{n+1} ~f(a_1, \ldots, a_n) \cdot a_{n+1} \\
&+ \sum_{i=1}^n (-1)^i f (a_1, \ldots, a_i \cdot_A a_{i+1}, \ldots, a_{n+1}),
\end{align*}
for $f \in C^n(A,B)$; $a_1, \ldots, a_{n+1} \in A.$ There is another cochain complex $\{ C^\bullet ({A}, {B}), \partial_\mathrm{Hoch} \}$ on the same cochain groups with the coboundary map
\begin{align*}
(\partial_\mathrm{Hoch} f) (a_1, \ldots, a_{n+1} ) =~& R(a_1) \cdot f(a_2, \ldots, a_{n+1} ) - S \big( a_1 \cdot f (a_2, \ldots, a_{n+1})  \big)\\
&+ (-1)^{n+1} ~ f(a_1, \ldots, a_n) \cdot R(a_{n+1}) - (-1)^{n+1} ~ S \big( f(a_1, \ldots, a_n) \cdot a_{n+1} \big) \\
&+ \sum_{i=1}^n (-1)^i f (a_1, \ldots, R(a_i) \cdot_A a_{i+1} + a_i \cdot_A R(a_{i+1}), \ldots, a_{n+1}),
\end{align*}
for $f \in C^n (A,B)$; $a_1, \ldots, a_{n+1} \in A$. As a byproduct of the above two cochain complexes, one may construct a new cochain complex $\{ C^\bullet_\mathrm{RBA} (A_R, B_S), \delta_\mathrm{RBA} \}$, where
\begin{align*}
C^0_\mathrm{RBA} (A_R, B_S) = B, ~~~~\quad C^1_\mathrm{RBA} (A_R, B_S) = C^1 (A,B),  ~~~~ \quad C^{n \geq 2}_\mathrm{RBA} (A_R, B_S) = C^n (A, B) \oplus C^{n-1} (A, B),
\end{align*}
and the differential $\delta_\mathrm{RBA} : C^\bullet_{\mathrm{RBA}} (A_R, B_S) \rightarrow C^{\bullet +1}_{\mathrm{RBA}} (A_R, B_S)$ given by
\begin{align*}
\delta_\mathrm{RBA} (u) =~& \delta_\mathrm{Hoch} (u),\\
\delta_\mathrm{RBA} ((f,g)) =~& \big(  \delta_\mathrm{Hoch} (f) ,~ \partial_\mathrm{Hoch} (g) + (-1)^n f \circ R^{\otimes n} - (-1)^n \sum_{i=1}^n S \circ f (R \otimes \cdots \otimes \underbrace{\mathrm{id}}_{i\mathrm{-th}} \otimes \cdots \otimes R) \big).
\end{align*}
The corresponding cohomology groups are called the (ordinary) cohomology of the Rota-Baxter algebra $A_R$ with coefficients in the Rota-Baxter bimodule $B_S$, and they are denoted by $H^\bullet_\mathrm{RBA} (A_R, B_S).$

\begin{theorem}\label{ab-ext-2ab}
Let $A_R$ be a Rota-Baxter algebra and $B_S$ be a Rota-Baxter bimodule over it. Then $$\mathrm{AbExt}(A_R,B_S) \cong H^2_{\mathrm{RBA}} (A_R, B_S).$$
\end{theorem}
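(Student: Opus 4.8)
The plan is to realize this as a special case of the non-abelian classification (Theorem \ref{non-ab-thm}), restricted to those $2$-cocycles for which the $B$-product is trivial and the induced Rota-Baxter bimodule structure is the prescribed one. First I would observe that an abelian extension is in particular a non-abelian extension, so Theorem \ref{non-ab-thm} applies; given a section $s$, the associated non-abelian $2$-cocycle $(\chi^s, \psi^s, \Phi^s)$ has the feature that, since the product on $B$ vanishes, $\psi^s$ is forced (by the bimodule axioms coming from \eqref{nabb1}) to be exactly the Rota-Baxter bimodule action on $B_S$, and by hypothesis this is the fixed given structure. So $\psi^s$ is no longer a free datum: it is constant across all extensions in $\mathrm{AbExt}(A_R, B_S)$. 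What remains free is the pair $(\chi^s, \Phi^s) \in C^2(A,B) \oplus C^1(A,B) = C^2_\mathrm{RBA}(A_R, B_S)$.

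Next I would match the cocycle conditions. With $\psi$ equal to the fixed bimodule action and with all $B\cdot_B B$ terms dropped, identity \eqref{nabb2} becomes exactly $\delta_\mathrm{Hoch}(\chi) = 0$, i.e. $\chi$ is a Hochschild $2$-cocycle; identities \eqref{nabb1} reduce to the bimodule axioms (automatically satisfied); and the pair of identities \eqref{nabb3} together with \eqref{nabb4} should collapse, after substituting $\psi = $ the action, precisely to the statement that $\partial_\mathrm{Hoch}(\Phi) + \chi\circ R^{\otimes 2} - \sum_{i} S\circ \chi(\cdots \mathrm{id}_i \cdots)$ (with the appropriate sign $(-1)^n$ for $n=2$) equals zero. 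In other words $(\chi, \Phi)$ is a $2$-cocycle in $\{C^\bullet_\mathrm{RBA}, \delta_\mathrm{RBA}\}$. Conversely, given such a $2$-cocycle $(\chi, \Phi)$, plugging it together with the fixed $\psi$ into the construction $A \oplus_{\chi, \psi} B$ with operator $U_\Phi$ from the proof of Theorem \ref{non-ab-thm} produces an abelian extension whose induced bimodule is the given one. Then I would check that the equivalence relation on non-abelian $2$-cocycles (Definition \ref{defn-nab-2-co-rb}(ii)), specialized to $B\cdot_B B = 0$ and to those $\phi$ which do not change $\psi$ — but in the abelian case $\phi(a)\cdot_B u = 0$ automatically, so \emph{every} $\phi$ is allowed — becomes exactly: $\chi - \chi' = \delta_\mathrm{Hoch}(\phi)$ and $\Phi - \Phi' = S\phi - \phi R$, i.e. $(\chi,\Phi) - (\chi',\Phi') = \delta_\mathrm{RBA}(\phi)$. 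Hence equivalence classes of abelian $2$-cocycles are exactly $H^2_\mathrm{RBA}(A_R, B_S)$.

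Assembling these observations, the bijection $\Lambda$ of Theorem \ref{non-ab-thm} restricts to a bijection $\mathrm{AbExt}(A_R, B_S) \to H^2_\mathrm{RBA}(A_R, B_S)$, with inverse the restriction of $\Upsilon$; one only needs to note that $\Lambda$ and $\Upsilon$ preserve the sub-collections in question (extensions with a fixed induced bimodule $\leftrightarrow$ cocycles with the fixed $\psi$), which is immediate from the explicit formulas. I expect the main obstacle to be purely bookkeeping: carefully verifying that \eqref{nabb3}--\eqref{nabb4}, once $\psi$ is replaced by the bimodule action and the quadratic-in-$B$ terms are erased, reproduce the somewhat intricate coboundary $\delta_\mathrm{RBA}$ on $C^2_\mathrm{RBA} = C^2(A,B)\oplus C^1(A,B)$ — in particular getting the signs and the mixed $R$/$\mathrm{id}$ terms $\sum_{i=1}^2 S\circ\chi(R\otimes\mathrm{id}, \ \mathrm{id}\otimes R)$ to line up with \eqref{nabb4}. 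This is a direct but term-by-term comparison and carries no conceptual difficulty.
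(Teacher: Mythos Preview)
Your proposal is correct and follows essentially the same approach as the paper: specialize the non-abelian classification of Theorem~\ref{non-ab-thm} to the case $\cdot_B = 0$ with $\psi$ fixed to the prescribed bimodule action, check that the surviving data $(\chi,\Phi)$ satisfy $\delta_{\mathrm{RBA}}(\chi,\Phi)=0$ via \eqref{nabb2} and \eqref{nabb4}, and observe that the equivalence relation reduces to $\delta_{\mathrm{RBA}}$-coboundaries. One small correction to your bookkeeping: identity \eqref{nabb3} does not feed into the cocycle condition on $(\chi,\Phi)$ but rather collapses (once the $\cdot_B$ terms vanish) to the Rota--Baxter bimodule compatibility for $\psi$, which is automatic; the second component of $\delta_{\mathrm{RBA}}(\chi,\Phi)=0$ comes entirely from \eqref{nabb4}.
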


\begin{proof}
Let $E_U$ be an abelian extension of $A_R$ by $B_S$. Since the associative product $\cdot_B$ is trivial, it follows from (\ref{nabb2}) that $\chi : A^{\otimes 2} \rightarrow B$ is a Hochschild $2$-cocycle on $A$ with values in the $A$-bimodule $B$, and the condition (\ref{nabb4}) implies that
\begin{align*}
\partial^1 (\Phi)(a, b) + \chi (R(a), R(b)) - S \big( \chi (R(a), b) + \chi (a, R(b))   \big) = 0, \text{ for all } a, b \in A.
\end{align*}
Thus, we have $\delta_\mathrm{RBA} (\chi, \Phi) (\chi , \Phi) = \big( \delta_\mathrm{Hoch}(\chi) , \partial^1 (\Phi) + \chi \circ (R \otimes R) - S \big( \chi \circ (R \otimes \mathrm{id} ) + \chi \circ (\mathrm{id} \otimes R) \big) \big) = 0$, which shows that $(\chi, \Phi) \in C^2_\mathrm{RBA} (A_R, B_S)$ is a $2$-cocycle. Moreover, if $E_U$ and $E'_{U'}$ are two equivalent abelian extensions, then one can check that the corresponding $2$-cocycles are cohomologous. Hence there is a well-defined map $\Lambda : \mathrm{AbExt}(A_R, B_S) \rightarrow H^2_\mathrm{RBA} (A_R, B_S)$.

The map $\Upsilon : H^2_\mathrm{RBA}(A_R, B_S) \rightarrow \text{AbExt} (A_R, B_S)$ on the other direction can be defined similar to Theorem \ref{non-ab-thm}. The maps $\Lambda$ and $\Upsilon$ are inverses to each other. Hence the proof.
\end{proof}


\medskip

Let $0 \rightarrow B_S \xrightarrow{i} E_U \xrightarrow{\pi} A_R \rightarrow 0$ be an abelian extension of Rota-Baxter algebras. Let $\psi$ denotes the given $A$-bimodule structure on $B$. Define
\begin{align*}
C_\psi = \big\{ (\beta, \alpha) \in \mathrm{Aut}(B_S) \times \mathrm{Aut} (A_R) ~|~ &\beta \psi (a,u) = \psi (\alpha(a), \beta (u)) \text{ and } \\
 &\beta \psi (u,a) = \psi ( \beta (u), \alpha(a)), \text{ for } a \in A, u \in B \big\}
\end{align*}
to be the set of all compatible pairs of Rota-Baxter automorphisms. Then $C_\psi$ is obviously a subgroup of $\mathrm{Aut}(B_S) \times \mathrm{Aut} (A_R)$. Given a section $s$ of the map $\pi$, let $(\chi, \Phi)$ be the $2$-cocycle corresponding to the abelian extension. For any pair $(\beta, \alpha) \in \mathrm{Aut}(B_S) \times \mathrm{Aut} (A_R)$, define a pair $(\chi_{(\beta, \alpha)}, \Phi_{(\beta, \alpha)})$ as 
\begin{align*}
\chi_{(\beta, \alpha)} (a, b) = \beta \circ \chi (\alpha^{-1}(a), \alpha^{-1}(b)) ~~~~ \text{ and } ~~~~ \Phi_{(\beta, \alpha)} (a) = \beta \circ \Phi (\alpha^{-1}(a)), \text{ for } a, b \in A.
\end{align*}
In general, $(\chi_{(\beta, \alpha)}, \Phi_{(\beta, \alpha)})$ is not a $2$-cocycle. However, if $(\beta, \alpha) \in C_\psi$, we can easily show that
\begin{align*}
\big( \delta_\mathrm{Hoch} (\chi_{(\beta, \alpha)})  \big) (a, b, c) = \beta \big( \underbrace{   \delta_\mathrm{Hoch} (\chi) }_{=0} (\alpha^{-1}(a), \alpha^{-1} (b), \alpha^{-1}(c) )  \big)= 0  ~~~ \text{ and }
\end{align*}
\begin{align*}
&\big(   \partial^1 (\Phi_{(\beta, \alpha)}) + \chi_{(\beta, \alpha)} \circ (R \otimes R) - S \big(  \chi_{(\beta, \alpha)} (R \otimes \mathrm{id}) + \chi_{(\beta, \alpha)} (\mathrm{id} \otimes R)  \big) \big) (a, b) \\
&= \beta \big( \underbrace{  \partial^1 (\Phi) + \chi \circ (R \otimes R) - S \big(  \chi (R \otimes \mathrm{id}) + \chi (\mathrm{id} \otimes R)  \big)}_{=0} \big) ( \alpha^{-1} (a), \alpha^{-1}(b) ) =0.
\end{align*}
Combining these, we get $\delta_{\mathrm{RBA}} (\chi_{(\beta, \alpha)} , \Phi_{(\beta, \alpha)}) = 0$ which shows that $(\chi_{(\beta, \alpha)} , \Phi_{(\beta, \alpha)})$ is a $2$-cocycle.



\begin{theorem}\label{ab-ext-ind-thm}
Let $0 \rightarrow B_S \xrightarrow{i} E_U \xrightarrow{\pi} A_R \rightarrow 0$ be an abelian extension of the Rota-Baxter algebra $A_R$ by the Rota-Baxter bimodule $B_S$. Then a pair $(\beta, \alpha) \in \mathrm{Aut}(B_S) \times \mathrm{Aut}(A_R)$ of Rota-Baxter automorphisms is inducible if and only if $(\beta, \alpha) \in C_\psi$ and the $2$-cocycles $(\chi_{(\beta, \alpha)}, \Phi_{(\beta, \alpha
)})$, $(\chi, \Phi)$ are cohomologous.
\end{theorem}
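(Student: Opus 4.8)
The plan is to mirror the structure of the proof of Theorem~\ref{thm-inducibility}, but now keeping track of the extra compatibility constraint that arises because the associative product on $B$ is trivial. First I would suppose $(\beta,\alpha)$ is inducible, say via $\gamma\in\mathrm{Aut}_B(E_U)$ with $\gamma|_B=\beta$ and $\pi\gamma s=\alpha$. The condition $(\beta,\alpha)\in C_\psi$ should fall out immediately from the fact that $\gamma$ is an algebra morphism: applying $\gamma$ to the product $s(a)\cdot_E u$ (which equals $\psi(a,u)$) and using $\gamma|_B=\beta$, $\gamma(s(a))\equiv s\alpha(a)\pmod B$, one gets $\beta\psi(a,u)=\psi(\alpha(a),\beta(u))$, and symmetrically for $\psi(u,a)$; here one uses that the discrepancy $\gamma s\alpha^{-1}-s$ lands in $B$ and that the $B$-action on $B$ is trivial, so terms like $\phi(\alpha(a))\cdot_B\beta(u)$ vanish. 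Then, exactly as in Theorem~\ref{thm-inducibility}, I would set $\phi:=\gamma s\alpha^{-1}-s:A\to B$ and verify that $\phi$ realizes the cohomology between $(\chi_{(\beta,\alpha)},\Phi_{(\beta,\alpha)})$ and $(\chi,\Phi)$ in the complex $\{C^\bullet_{\mathrm{RBA}}(A_R,B_S),\delta_{\mathrm{RBA}}\}$; the triviality of $\cdot_B$ collapses the $\phi(a)\cdot_B\phi(b)$ term in the $\chi$-computation, so $\chi_{(\beta,\alpha)}-\chi=\delta_{\mathrm{Hoch}}(\phi)$, while the $\Phi$-computation gives $\Phi_{(\beta,\alpha)}-\Phi=S\phi-\phi R$, and together these say $(\chi_{(\beta,\alpha)},\Phi_{(\beta,\alpha)})-(\chi,\Phi)=\delta_{\mathrm{RBA}}(\phi)$.

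Conversely, I would assume $(\beta,\alpha)\in C_\psi$ and that the two $2$-cocycles are cohomologous via some $\phi:A\to B$, and then build $\gamma:E\to E$ by the same formula as in Theorem~\ref{thm-inducibility}: writing $e=(a,u)$ under the module identification $E\cong A\oplus B$ induced by $s$, set $\gamma(e)=(\alpha(a),\beta(u)+\phi\alpha(a))$. Bijectivity of $\gamma$ is verified exactly as before. The point where $C_\psi$ is needed is the check that $\gamma$ is an algebra morphism: in the abelian setting the product on $E\cong A\oplus_{\chi,\psi}B$ has the form $(a,u)\cdot(b,v)=(a\cdot_A b,\ \psi(a,v)+\psi(u,b)+\chi(a,b))$ (no $u\cdot_B v$ term), and expanding $\gamma(e)\cdot_E\gamma(f)$ one needs $\beta\psi(a,v)=\psi(\alpha(a),\beta(v))$ and $\beta\psi(u,b)=\psi(\beta(u),\alpha(b))$ — precisely the defining relations of $C_\psi$ — together with $\chi_{(\beta,\alpha)}-\chi=\delta_{\mathrm{Hoch}}(\phi)$ to absorb the $\phi$-terms. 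The compatibility with the Rota-Baxter operators, $\gamma\circ U=U\circ\gamma$, uses the relation $\Phi_{(\beta,\alpha)}-\Phi=S\phi-\phi R$ exactly as in the non-abelian case. Finally, reading off $\gamma|_B=\beta$ and $\overline{\gamma}=\pi\gamma s=\alpha$ shows $\tau(\gamma)=(\beta,\alpha)$, so the pair is inducible.

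The main obstacle I anticipate is bookkeeping rather than conceptual: one must carefully separate which parts of the old non-abelian computation survive and which are modified by the triviality of $\cdot_B$, and in particular make sure that the extra hypothesis $(\beta,\alpha)\in C_\psi$ is genuinely used (and is genuinely necessary) in the forward direction — it is needed because, unlike in the non-abelian case where $\psi$ was part of the cocycle data being twisted, here $\psi$ is fixed in advance as the given bimodule structure, so $\gamma$ can only induce $(\beta,\alpha)$ if that pair respects $\psi$. A secondary subtlety is ensuring that the cohomology relation is interpreted in the total complex $C^\bullet_{\mathrm{RBA}}$ (so that the two components $\delta_{\mathrm{Hoch}}(\phi)$ and $\partial_{\mathrm{Hoch}}(\phi)+(\text{correction terms})$ match the definitions of $\chi_{(\beta,\alpha)}-\chi$ and $\Phi_{(\beta,\alpha)}-\Phi$ respectively); this is routine once one writes out $\delta_{\mathrm{RBA}}(\phi)$ in degree $1$, but it is the place where a sign error would most easily creep in.
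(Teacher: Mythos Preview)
Your proposal is correct, but it takes a longer route than the paper. The paper's proof is a two-line reduction: it simply invokes Theorem~\ref{thm-inducibility} (the non-abelian inducibility criterion), and then observes that when $\cdot_B=0$ the equivalence condition in Definition~\ref{defn-nab-2-co-rb}(ii) between the non-abelian $2$-cocycles $(\chi_{(\beta,\alpha)},\psi_{(\beta,\alpha)},\Phi_{(\beta,\alpha)})$ and $(\chi,\psi,\Phi)$ splits into two pieces --- the first two identities force $\psi_{(\beta,\alpha)}=\psi$ (which is exactly $(\beta,\alpha)\in C_\psi$), while the remaining two collapse to $(\chi_{(\beta,\alpha)},\Phi_{(\beta,\alpha)})-(\chi,\Phi)=\delta_{\mathrm{RBA}}(\phi)$. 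By contrast, you propose to re-run the entire construction of Theorem~\ref{thm-inducibility} from scratch in the abelian setting, tracking which terms disappear. This is not wrong --- and your identification of where $C_\psi$ enters (namely, that $\psi$ is now fixed data rather than part of the cocycle being twisted) is exactly right --- but it duplicates work already done. The paper's approach buys economy and makes the logical dependence on the non-abelian theorem transparent; your approach buys a self-contained argument that does not require the reader to unwind the earlier proof.
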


\begin{proof}
Note that the given $A$-bimodule structure $\psi$ is given by $\psi = \psi^s$. It follows from Theorem \ref{thm-inducibility} that $(\beta, \alpha)$ is inducible if and only if $\psi = \psi_{(\beta, \alpha)}$ and the $2$-cocycles $(\chi_{(\beta, \alpha)}, \Phi_{(\beta, \alpha
)})$, $(\chi, \Phi)$ are cohomologous. The result now follows as the condition $\psi = \psi_{(\beta, \alpha)}$ is equivalent to the fact that $(\beta, \alpha) \in C_\psi$. This completes the proof.
\end{proof}

The Wells map in the context of abelian extensions is given by $\mathcal{W} : C_\psi \rightarrow H^2_\mathrm{RBA} (A_R, B_S)$, where
\begin{align}\label{wells-map-ab}
\mathcal{W} ((\beta, \alpha)) = [(\chi_{(\beta, \alpha)}, \Phi_{(\beta, \alpha)}) - (\chi, \Phi)], \text{ for } (\beta, \alpha) \in C_\psi.
\end{align}
On the other hand, if $\gamma \in \mathrm{Aut}_B (E_U)$, then $\tau (\gamma) = (\gamma|_B, \overline{\gamma})$ is in $C_\psi$.

In \cite{barda} the authors considered the abelian extension of Lie algebras and interpret the automorphism group of the extension in terms of certain derivation space. Here we will generalize this result in the context of Rota-Baxter algebras.

\begin{proposition}
Let $0 \rightarrow B_S \xrightarrow{i} E_U \xrightarrow{\pi} A_R \rightarrow 0$ be an abelian extension of the Rota-Baxter algebra $A_R$ by a Rota-Baxter bimodule $B_S$. Then
\begin{align*}
\mathrm{Aut}_B^{B,A} (E_U) ~\cong ~ \mathrm{Der} (A_R, B_S) \text{ as groups.}
\end{align*}
\end{proposition}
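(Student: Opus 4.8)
The plan is to construct an explicit isomorphism between $\mathrm{Aut}_B^{B,A}(E_U)$ and the group $\mathrm{Der}(A_R, B_S)$ of Rota-Baxter derivations, where the latter denotes the set of linear maps $D : A \to B$ that are Hochschild derivations (i.e. $D(a \cdot_A b) = a \cdot D(b) + D(a) \cdot b$, using the given $A$-bimodule structure $\psi$ on $B$) and additionally satisfy the Rota-Baxter compatibility $S \circ D = D \circ R$; the group operation on $\mathrm{Der}(A_R, B_S)$ is pointwise addition. First I would fix a section $s$ of $\pi$, which identifies $E \cong A \oplus B$ as a $\mathbf{k}$-module, so that every $\gamma \in \mathrm{Aut}_B^{B,A}(E_U)$ has the property $\gamma|_B = \mathrm{id}_B$ and $\overline{\gamma} = \pi\gamma s = \mathrm{id}_A$. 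The first of these says $\gamma$ fixes the submodule $B$ pointwise, and the second says $\pi\gamma s = \mathrm{id}_A$; combining them, for each $a \in A$ the element $(\gamma s - s)(a)$ lies in $\ker\pi \cong B$, so setting $D_\gamma(a) := (\gamma s - s)(a)$ defines a linear map $D_\gamma : A \to B$. This is exactly the construction already used (with $\phi = \gamma s\alpha^{-1} - s$) in the proof of Theorem~\ref{thm-inducibility} specialized to $\beta = \mathrm{id}_B$, $\alpha = \mathrm{id}_A$.

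Next I would check that $D_\gamma$ is indeed a Rota-Baxter derivation. Using the identification $e = (a,u) \mapsto s(a) + u$ and that $\gamma(a,u) = (a,\, u + D_\gamma(a))$ (which follows from $\gamma|_B = \mathrm{id}_B$ and the definition of $D_\gamma$), the multiplicativity $\gamma(e \cdot_E f) = \gamma(e) \cdot_E \gamma(f)$ applied to $e = (a,0)$, $f = (b,0)$ yields, after using that $\cdot_B$ is trivial in the abelian setting, precisely the derivation identity $D_\gamma(a \cdot_A b) = \psi(a, D_\gamma(b)) + \psi(D_\gamma(a), b)$; similarly $\gamma \circ U = U \circ \gamma$ on elements $(a,0)$ gives $S D_\gamma(a) = D_\gamma R(a)$, exactly the $\Phi$-computation at the end of the proof of Theorem~\ref{thm-inducibility} with $\beta = \mathrm{id}$. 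Conversely, given $D \in \mathrm{Der}(A_R,B_S)$, define $\gamma_D : E \to E$ by $\gamma_D(a,u) = (a, u + D(a))$; the same computations run backwards show $\gamma_D$ is an algebra morphism commuting with $U$, and $\gamma_D$ is bijective with inverse $(a,u) \mapsto (a, u - D(a))$, so $\gamma_D \in \mathrm{Aut}_B^{B,A}(E_U)$. One then verifies $\gamma \mapsto D_\gamma$ and $D \mapsto \gamma_D$ are mutually inverse, and that the correspondence is a group homomorphism: for $\gamma, \gamma' \in \mathrm{Aut}_B^{B,A}(E_U)$, the composite $\gamma \circ \gamma'$ sends $(a,u)$ to $(a,\, u + D_{\gamma'}(a) + D_\gamma(a))$ (again using that $\gamma$ fixes $B$ pointwise), so $D_{\gamma\gamma'} = D_\gamma + D_{\gamma'}$.

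The main obstacle I anticipate is purely bookkeeping rather than conceptual: one must be careful that in the abelian setting the non-abelian $2$-cocycle data degenerates correctly — $\psi$ is the genuine bimodule action (independent of the section, by \cite{DasSKbimod}), $\chi$ plays no role in the formulas for $\gamma_D$ since the extra terms involving $\chi$ and $\Phi$ cancel once one restricts to the condition $\gamma|_B = \mathrm{id}$, $\overline{\gamma} = \mathrm{id}$ — and that the multiplicativity check genuinely closes to the derivation identity only because $\cdot_B = 0$. A second minor point is to confirm independence of the chosen section $s$: a different section $s'$ changes $D_\gamma$ by the coboundary of the element of $B$ measuring $s' - s$, but since we are comparing the \emph{group} $\mathrm{Aut}_B^{B,A}(E_U)$ (which does not see $s$) to $\mathrm{Der}(A_R,B_S)$ (which also does not), the isomorphism class is well-defined; concretely one just notes that for $\gamma$ fixing $B$ pointwise and inducing $\mathrm{id}_A$, $(\gamma s' - s')(a) = (\gamma s - s)(a)$ because $s' - s$ maps into $B$ which $\gamma$ fixes. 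Assembling these steps gives the claimed group isomorphism $\mathrm{Aut}_B^{B,A}(E_U) \cong \mathrm{Der}(A_R, B_S)$.
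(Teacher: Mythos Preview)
Your proposal is correct and follows essentially the same approach as the paper: both define the map $\gamma \mapsto D_\gamma$ by $D_\gamma(a) = (\gamma s - s)(a)$, verify that $D_\gamma$ is a Hochschild derivation satisfying $S \circ D_\gamma = D_\gamma \circ R$, and check the group homomorphism property via $\gamma\gamma' s - s = \gamma(\gamma' s - s) + (\gamma s - s)$ together with $\gamma|_B = \mathrm{id}_B$. Your write-up is in fact more thorough than the paper's, which omits the explicit inverse $\gamma_D$ and the section-independence argument that you supply.
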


\begin{proof}
We will only construct a bijective map between the above mentioned groups. Let $\gamma \in \mathrm{Aut}_B^{B,A} (E_U)$. Then for any $a \in A$, we have
\begin{align*}
\pi \big( (\gamma s - s)(a)  \big) = \pi \gamma s (a) - \pi s (a) = 0.
\end{align*}
This shows that $(\gamma s - s)(a) \in \mathrm{ker }(\pi) = \mathrm{im }(i) \cong B$. Therefore, one obtains a map $\overline{\gamma} : A \rightarrow B$ given by $\overline{\gamma} (a) = (\gamma s - s)(a)$, for $a \in A$. The map $\overline{\gamma}$ satisfies
\begin{align*}
\overline{\gamma}(a \cdot_A b) =~& \psi ( \overline{\gamma}(a), b) + \psi (a, \overline{\gamma}(b)), \text{ for } a, b \in A,\\
S \circ \overline{\gamma} =~& \overline{\gamma} \circ R.
\end{align*}
Thus, $\overline{\gamma} \in \mathrm{Der}(A_R, B_S)$. Hence there is a map $\Theta : \mathrm{Aut}_B^{B,A} (E_U) \rightarrow \mathrm{Der} (A_R, B_S)$, $\Theta (\gamma) = \overline{\gamma}$ which is also bijective. The map $\Theta$ is obviously a group homomorphism as
\begin{align*}
\Theta (\gamma \eta) = \gamma \eta s - s = \gamma (\eta s - s) + \gamma s - s = \gamma \overline{\eta} + \overline{\gamma} = \overline{\eta} + \overline{\gamma} ~(\text{as } \gamma|_B = \mathrm{id}_B).
\end{align*}
This completes the proof.
\end{proof}

\begin{theorem}\label{ab-ext-wells-thm}
Let $0 \rightarrow B_S \xrightarrow{i} E_U \xrightarrow{\pi} A_R \rightarrow 0$ be an abelian extension of the Rota-Baxter algebra $A_R$ by a Rota-Baxter bimodule $B_S$. Then there is a short exact sequence
\begin{align}\label{fun-ab}
1 \rightarrow \mathrm{Der} (A_R, B_S) \xrightarrow{\iota} \mathrm{Aut}_B(E_U) \xrightarrow{\tau} C_\psi \xrightarrow{\mathcal{W}} H^2_\mathrm{RBA} (A_R,B_S).
\end{align}
\end{theorem}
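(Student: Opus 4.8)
The plan is to follow the proof of Theorem \ref{thm-wells} essentially verbatim, transporting each exactness statement to the abelian situation and feeding in the three auxiliary facts already available: the identification $\mathrm{Aut}_B^{B,A}(E_U) \cong \mathrm{Der}(A_R, B_S)$ of the preceding Proposition, the observation recorded just after (\ref{wells-map-ab}) that $\tau(\gamma) = (\gamma|_B, \overline{\gamma})$ lies in $C_\psi$ for every $\gamma \in \mathrm{Aut}_B(E_U)$, and Theorem \ref{ab-ext-ind-thm}. First I would make the three maps in (\ref{fun-ab}) precise. Here $\iota$ is the composite of the group isomorphism $\mathrm{Der}(A_R, B_S) \xrightarrow{\ \sim\ } \mathrm{Aut}_B^{B,A}(E_U)$ furnished by the Proposition with the inclusion $\mathrm{Aut}_B^{B,A}(E_U) \hookrightarrow \mathrm{Aut}_B(E_U)$; the map $\tau$ is the homomorphism of Section \ref{sec-4} with codomain restricted to $C_\psi$, which is legitimate precisely because its image lands in $C_\psi$; and $\mathcal{W}$ is the map (\ref{wells-map-ab}), which is well defined since $(\chi_{(\beta, \alpha)}, \Phi_{(\beta, \alpha)})$ was shown to be a $2$-cocycle whenever $(\beta, \alpha) \in C_\psi$, and is independent of the section $s$ by the same argument used for the non-abelian Wells map.

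Exactness at $\mathrm{Der}(A_R, B_S)$ is immediate: $\iota$ is injective, being the composite of an isomorphism with an injection. For exactness at $\mathrm{Aut}_B(E_U)$ I would repeat the argument of Theorem \ref{thm-wells}: an element $\gamma \in \mathrm{Aut}_B(E_U)$ lies in $\mathrm{ker}(\tau)$ if and only if $\gamma|_B = \mathrm{id}_B$ and $\overline{\gamma} = \mathrm{id}_A$, that is, if and only if $\gamma \in \mathrm{Aut}_B^{B,A}(E_U)$; under the isomorphism of the Proposition this subgroup is exactly $\mathrm{im}(\iota)$, whence $\mathrm{ker}(\tau) = \mathrm{im}(\iota)$.

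For exactness at $C_\psi$ I would invoke Theorem \ref{ab-ext-ind-thm}. A pair $(\beta, \alpha) \in C_\psi$ lies in $\mathrm{im}(\tau)$ precisely when it is inducible in the sense of that theorem; since membership in $C_\psi$ is already assumed, Theorem \ref{ab-ext-ind-thm} says this holds if and only if $(\chi_{(\beta, \alpha)}, \Phi_{(\beta, \alpha)})$ and $(\chi, \Phi)$ are cohomologous, i.e. if and only if $\mathcal{W}((\beta, \alpha)) = 0$. Hence $\mathrm{im}(\tau) = \mathrm{ker}(\mathcal{W})$, which completes the proof. The argument is essentially bookkeeping on top of the results already established; the only point requiring a little care is to check that the codomain restriction of $\tau$ to $C_\psi$ is valid and that ``inducible'' in Theorem \ref{ab-ext-ind-thm} is literally membership in $\mathrm{im}(\tau)$, but both were settled in the run-up, so I anticipate no genuine difficulty beyond assembling the pieces.
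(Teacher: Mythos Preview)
Your proposal is correct and matches the paper's intended argument; in fact the paper gives no explicit proof of this theorem, leaving it to follow from the Proposition identifying $\mathrm{Aut}_B^{B,A}(E_U)\cong\mathrm{Der}(A_R,B_S)$, the observation that $\tau$ lands in $C_\psi$, and Theorem~\ref{ab-ext-ind-thm}, exactly as you assemble them.
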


\medskip

An abelian extension $0 \rightarrow B_S \xrightarrow{i} E_U \xrightarrow{\pi} A_R \rightarrow 0$ of the Rota-Baxter algebra $A_R$ by a Rota-Baxter bimodule $B_S$ is said to be {\bf split} if there is a section $s$ (of the map $\pi$) such that $s: A_R \rightarrow E_U$ is a morphism of Rota-Baxter algebras. In this case, the Rota-Baxter algebra $E_U$ is isomorphic to the semidirect product $(A \oplus B)_{{R \oplus S}}$, where the associative product on $A \oplus B$ is given by
\begin{align*}
(a,u) \cdot_\ltimes (b, v) = \big( a \cdot_A b, \psi (a, v) + \psi (u, b) \big),
\end{align*}
for $(a,u), (b, v) \in A \oplus B$. Further, since $s : A_R \rightarrow E_U$ is a morphism of Rota-Baxter algebras, we have
\begin{align*}
\chi (a, b) = s(a) \cdot_E s(b) - s (a \cdot_A b) = 0 ~~~~ \text{ and } ~~~~ \Phi (a) = (Us - sR) (a) = 0.
\end{align*}
Therefore, we have $(\chi, \Phi) = 0$. As a consequence, it follows from (\ref{wells-map-ab}) that the Wells map vanishes identically. Thus, the exact sequence (\ref{fun-ab}) gives rise to the following sequence
\begin{align}\label{new-wells-ab}
0 \rightarrow \mathrm{Der}(A_R, B_S) \rightarrow \mathrm{Aut}_B (E_U) \xrightarrow{\tau} C_\psi \rightarrow 0.
\end{align}
It can be easily check that the above sequence is a split exact sequence. To see this, we define a map $t : C_\psi \rightarrow \mathrm{Aut}_B (E_U)$ by $t ((\beta, \alpha)) = t_{(\beta, \alpha)}$, where $t_{(\beta, \alpha)} : E_U \rightarrow E_U$ is given by $t_{(\beta, \alpha)} (a,u) = (\alpha (a) , \beta (u))$, for $(a,u) \in E \cong A \oplus B$. The map $t$ obviously satisfies $\tau t = \mathrm{id}_{C_\psi}$, which shows that $t$ is a section of the map $\tau$. The map $t$ is also a group homomorphism. Therefore, (\ref{new-wells-ab}) is a split exact sequence. As a summary, we obtain the following.

\begin{proposition}
Let $0 \rightarrow B_S \xrightarrow{i} E_U \xrightarrow{\pi} A_R \rightarrow 0$ be a split abelian extension of the Rota-Baxter algebra $A_R$ by a Rota-Baxter bimodule $B_S$. Then $\mathrm{Aut}_B (E_U) ~ \cong ~ C_\psi \ltimes \mathrm{Der}(A_R, B_S)$ as groups.
\end{proposition}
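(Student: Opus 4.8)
The plan is to upgrade the split exact sequence \eqref{new-wells-ab} obtained just above into a semidirect product decomposition by exhibiting an explicit splitting homomorphism and checking that it conjugates the normal subgroup correctly. Recall from the discussion preceding the statement that, since the extension is split, we have $(\chi,\Phi)=0$, the module $E$ is identified with $A\oplus B$ with product $\cdot_\ltimes$ and Rota-Baxter operator $R\oplus S$, and the Wells map vanishes, so that \eqref{new-wells-ab} reads $0 \to \mathrm{Der}(A_R,B_S) \xrightarrow{\iota} \mathrm{Aut}_B(E_U) \xrightarrow{\tau} C_\psi \to 0$, with $\iota$ identifying $\mathrm{Der}(A_R,B_S)$ with $\mathrm{Aut}_B^{B,A}(E_U)=\ker(\tau)$ (via the preceding proposition). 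The section $t:C_\psi \to \mathrm{Aut}_B(E_U)$, $t_{(\beta,\alpha)}(a,u)=(\alpha(a),\beta(u))$, is already noted to be a well-defined group homomorphism splitting $\tau$; I would first reverify that $t_{(\beta,\alpha)}$ is indeed an automorphism of the Rota-Baxter algebra $E_U=(A\oplus B)_{R\oplus S}$, which is exactly where the condition $(\beta,\alpha)\in C_\psi$ is used — it guarantees $t_{(\beta,\alpha)}$ respects $\cdot_\ltimes$ — while compatibility with $R\oplus S$ is automatic since $\beta,\alpha$ are Rota-Baxter automorphisms.

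Given this data, the abstract fact is that whenever a short exact sequence of groups $1\to N\to G\xrightarrow{\tau} Q\to 1$ admits a homomorphic section $t:Q\to G$, one has $G\cong Q\ltimes N$, where $Q$ acts on $N$ by $q\cdot n = t(q)\,n\,t(q)^{-1}$. I would invoke this with $G=\mathrm{Aut}_B(E_U)$, $N=\mathrm{Aut}_B^{B,A}(E_U)\cong\mathrm{Der}(A_R,B_S)$, $Q=C_\psi$, and the section $t$ above: every $\gamma\in\mathrm{Aut}_B(E_U)$ factors uniquely as $\gamma = t_{\tau(\gamma)}\cdot\big(t_{\tau(\gamma)}^{-1}\gamma\big)$ with the second factor in $\ker(\tau)$, giving the set bijection $\mathrm{Aut}_B(E_U)\cong C_\psi\times\mathrm{Der}(A_R,B_S)$, and the group law transports to the semidirect product law because $t$ is multiplicative. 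Thus $\mathrm{Aut}_B(E_U)\cong C_\psi\ltimes\mathrm{Der}(A_R,B_S)$.

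The only genuine point requiring care — and the step I expect to be the main (mild) obstacle — is verifying that $t_{(\beta,\alpha)}$ preserves the product $\cdot_\ltimes$ precisely when $(\beta,\alpha)\in C_\psi$: unwinding $t_{(\beta,\alpha)}\big((a,u)\cdot_\ltimes(b,v)\big)=\big(\alpha(a\cdot_A b),\,\beta\psi(a,v)+\beta\psi(u,b)\big)$ and comparing with $t_{(\beta,\alpha)}(a,u)\cdot_\ltimes t_{(\beta,\alpha)}(b,v)=\big(\alpha(a)\cdot_A\alpha(b),\,\psi(\alpha(a),\beta(v))+\psi(\beta(u),\alpha(b))\big)$, equality of the first components holds since $\alpha$ is an algebra automorphism, and equality of the second is exactly the pair of compatibility identities defining $C_\psi$. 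Everything else — that $t$ is a homomorphism, that $\tau t=\mathrm{id}$, that $\ker\tau\cong\mathrm{Der}(A_R,B_S)$ — has already been recorded above, so the remainder is the routine bookkeeping of the split-exact-sequence-implies-semidirect-product lemma, and I would state it in one or two lines without belaboring the conjugation formula.
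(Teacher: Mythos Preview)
Your proposal is correct and follows essentially the same approach as the paper: the paper also uses the section $t_{(\beta,\alpha)}(a,u)=(\alpha(a),\beta(u))$ of $\tau$, notes that it is a group homomorphism, and concludes the semidirect product decomposition from the resulting split exact sequence. You are simply more explicit than the paper about why $t_{(\beta,\alpha)}$ lands in $\mathrm{Aut}_B(E_U)$ (the $C_\psi$ compatibility) and about the standard split-exact-sequence-implies-semidirect-product argument.
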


\section{Extensions of dendriform algebras and related problems}\label{sec-6}
In this section, we study non-abelian extensions of dendriform algebras in terms of non-abelian cohomology. Given a non-abelian extension, we also obtain the fundamental sequence of Wells regarding extensions and liftings of dendriform algebra automorphisms. We first start with the following definitions.

\begin{definition}
(i) Let $\mathcal{A}$ and $\mathcal{B}$ be two dendriform algebras. A {\bf non-abelian extension} of $\mathcal{A}$ by $\mathcal{B}$ is simply a short exact sequence of dendriform algebras
\begin{align}\label{dend-nab-seq}
\xymatrix{
0 \ar[r] & \mathcal{B} \ar[r]^i & \mathcal{E} \ar[r]^\pi & \mathcal{A} \ar[r] & 0.
}
\end{align}

(ii) Two non-abelian extensions $\mathcal{E}$ and $\mathcal{E}'$ are said to be {\bf equivalent} if there exists a morphism $\mathcal{E} \xrightarrow{\varphi} \mathcal{E}'$ of dendriform algebras which makes the following diagram commutative
\begin{align}\label{dend-nab-seq-eq}
\xymatrix{
0 \ar[r] & \mathcal{B} \ar@{=}[d] \ar[r]^i & \mathcal{E} \ar[d]^\varphi \ar[r]^\pi & \mathcal{A} \ar[r] \ar@{=}[d] & 0 \\
0 \ar[r] & \mathcal{B} \ar[r]_{i'} & \mathcal{E} \ar[r]_{\pi'} & \mathcal{A} \ar[r] & 0.
}
\end{align}
Let $\mathrm{Ext}(\mathcal{A}, \mathcal{B})$ denote the set of all equivalence classes of non-abelian extensions of $\mathcal{A}$ by $\mathcal{B}$.
\end{definition}

Next, we will discuss non-abelian cohomology of dendriform algebras. Let $C_n$ be the set of first $n$ natural numbers. We will often treat the elements of $C_n$ as some symbols (i.e. no operations on the elements of $C_n$ are allowed). Thus, for convenience, we write $C_n = \{ [1], [2], \ldots, [n] \}.$ There are some special maps $R_0^{1, \ldots, n, \ldots, 1} : C_{m+n-1} \rightarrow C_m$ and $R_i^{1, \ldots, n, \ldots, 1} : C_{m+n-1} \rightarrow {\bf k}[C_n]$ that are useful in the context of dendriform structures, given by\\

\begin{center}
\begin{tabular}{ |c|c|c|c| } 
\hline
$[r] \in C_{m+n-1}$ & $1 \leq r \leq i-1$ & $i \leq r \leq i+n-1$ & $i+n-1 \leq r \leq m+n-1$ \\ 
\hline
$R_0^{1, \ldots, n, \ldots, 1} [r]$ & $[r]$ & $[i]$ & $[i-n+1]$ \\ 
\hline
$R_i^{1, \ldots, n, \ldots, 1} [r]$ & $[1] + \cdots + [n]$ & $[r-i+1]$ & $[1] + \cdots + [n]$ \\ 
\hline
\end{tabular}
\end{center}

\medskip

\medskip

\noindent In the notations of the above maps $R_0^{1, \ldots, n, \ldots, 1}$ and $R_i^{1, \ldots, n, \ldots, 1}$, we have the superscripts as $m$ many numbers $\overbrace{1, \ldots, \underbrace{n}_{i\text{-th}}, \ldots, 1}^{m ~\mathrm{ many}}$ with all $1$ except in the $i$-th place where we have $n$. Let $(\mathcal{A}, \prec_\mathcal{A}, \succ_\mathcal{A})$ be a dendriform algebra. Then there is a map $\pi_\mathcal{A} : {\bf k}[C_2] \otimes \mathcal{A}^{\otimes 2} \rightarrow \mathcal{A}$ given by 
\begin{align*}
\pi_\mathcal{A} ([1]; a, b) = a \prec_\mathcal{A} b ~~~ \text{ and } ~~~ \pi_\mathcal{A} ([2]; a, b) = a \succ_\mathcal{A} b, ~\text{ for } a, b \in \mathcal{A}.
\end{align*}

\medskip

\begin{definition}\label{defin-dend-non2}
(i) Let $\mathcal{A}$ and $\mathcal{B}$ be two dendriform algebras. A {\bf non-abelian $2$-cocycle} on $\mathcal{A}$ with values in $\mathcal{B}$ is given by a pair $(\chi, \psi)$ in which
\begin{align*}
\chi : {\bf k}[C_2] \otimes \mathcal{A}^{\otimes 2} \rightarrow  \mathcal{B} ~~~ \text{ and } ~~~ \psi : {\bf k}[C_2] \otimes \big( (\mathcal{A} \otimes \mathcal{B}) \oplus (\mathcal{B} \otimes \mathcal{A}) \big) \rightarrow \mathcal{B}
\end{align*}
are linear maps satisfying
\begin{align}\label{nabb1-dend}
\begin{cases}
&\psi \big(   R_0^{1,2} [r]; a ,~\psi (R_2^{1,2} [r]; b, u) \big) = \psi \big( R_0^{2,1} [r];  \pi_\mathcal{A} (R_1^{2,1} [r]; a,b), u  \big) +\pi_\mathcal{B} \big(  R_0^{2,1}[r]; \chi (R_1^{2,1}[r]; a, b), u \big),\\
&\psi \big(   R_0^{1,2} [r]; a ,~\psi (R_2^{1,2} [r]; u, b) \big) = \psi \big( R_0^{2,1} [r];  \psi (R_1^{2,1} [r]; a,u), b  \big), \\
&\psi \big( R_0^{2,1} [r];  \psi (R_1^{2,1} [r]; u,a), b  \big) = \psi \big(   R_0^{1,2} [r]; u ,~\pi_\mathcal{A} (R_2^{1,2} [r]; a, b) \big) + \pi_\mathcal{B} \big(  R_0^{1,2} [r]; u ,~\chi (R_2^{1,2} [r]; a, b)  \big),
\end{cases}
\end{align}
\begin{align}\label{nabb-2-dend}
\psi \big( R_0^{1,2} [r]; a, \chi (R_2^{1,2} [r]; b, c)   \big)   - \chi \big(  R_0^{2,1} [r];& \pi_\mathcal{A} ( R_1^{2,1}[r]; a, b), c  \big) + \chi \big( R_0^{1,2} [r]; a, \pi_\mathcal{A} (R_2^{1,2} [r]; b, c)   \big)   \\
&-  \psi \big(  R_0^{2,1} [r]; \chi ( R_1^{2,1}[r]; a, b), c  \big) = 0, \nonumber
\end{align}
for $a, b, c \in \mathcal{A}$, $u \in \mathcal{B}$ and $[r] \in C_3.$

(ii) Two non-abelian $2$-cocycles $(\chi, \psi)$ and $(\chi', \psi')$ are said to be {\bf equivalent} if there exists a linear map $\phi : \mathcal{A} \rightarrow \mathcal{B}$ that satisfies
\begin{align}
\begin{cases}
\psi ([r]; a, u) - \psi' ([r]; a, u) = \pi_\mathcal{B} \big( [r] ; \phi (a), u  \big), \\
\psi ([r]; u, a) - \psi' ([r]; u, a) = \pi_\mathcal{B} \big( [r]; u, \phi (a) \big),
\end{cases}
\end{align}
\begin{align}
\chi ([r]; a, b) - \chi' ([r]; a, b) = \psi' \big([r]; a , \phi (b) \big) + \psi' \big( [r]; \phi (a), b  \big) - \phi \big(   \pi_\mathcal{A} ([r]; a, b) \big) + \pi_\mathcal{B} \big( [r]; \phi(a), \phi(b)  \big),
\end{align}
for $a, b \in \mathcal{A}$, $u \in \mathcal{B}$ and $[r] \in C_2$. In this case, we write $(\chi, \psi) \overset{\phi}{\sim} (\chi', \psi')$.
\end{definition}

We denote the set of all equivalence classes of non-abelian $2$-cocycles by $H^2_{nab} (\mathcal{A}, \mathcal{B})$. This is called the {\bf non-abelian cohomology} of the dendriform algebra $\mathcal{A}$ with values in the dendriform algebra $\mathcal{B}$. When $\mathcal{B}$ is equipped with the trivial dendriform structure (i.e. when $\pi_\mathcal{B} = 0$), then a non-abelian $2$-cocycle $(\chi, \psi)$ is equivalent to having a representation $\psi$ of the dendriform algebra $\mathcal{A}$ on the module $\mathcal{B}$, and an ordinary $2$-cocycle $\chi$ on the dendriform algebra $\mathcal{A}$ with coefficients in the representation $\mathcal{B}$. Further, two non-abelian $2$-cocycles $(\chi, \psi)$ and $(\chi', \psi')$ are equivalent if and only if $\psi = \psi'$ and the ordinary $2$-cocycles $\chi, \chi'$ are cohomologous in the ordinary sense. Thus, in this case, the non-abelian cohomology $H^2_{nab}(\mathcal{A}, \mathcal{B})$ coincides with the ordinary dendriform cohomology.

\begin{theorem}\label{non-ab-dend-thm}
Let $\mathcal{A}$ and $\mathcal{B}$ be two dendriform algebras. Then there is a one-to-one correspondence between the space $\mathrm{Ext}(\mathcal{A}, \mathcal{B})$ and the group $H^2_{nab}(\mathcal{A}, \mathcal{B}).$
\end{theorem}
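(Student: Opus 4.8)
The plan is to transcribe, \emph{mutatis mutandis}, the proof of Theorem \ref{non-ab-thm}, now encoding the pair of operations $(\prec,\succ)$ uniformly through the symbol set $C_2$ and the structure map $\pi$.

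\emph{From extensions to cocycles.} Given a non-abelian extension $0\to\mathcal{B}\xrightarrow{i}\mathcal{E}\xrightarrow{\pi}\mathcal{A}\to 0$ and a $\mathbf{k}$-linear section $s$ of $\pi$, I would set, for $[r]\in C_2$, $a,b\in\mathcal{A}$, $u\in\mathcal{B}$,
\[
\chi^s([r];a,b):=\pi_{\mathcal{E}}\big([r];s(a),s(b)\big)-s\big(\pi_{\mathcal{A}}([r];a,b)\big),\qquad \psi^s([r];a,u):=\pi_{\mathcal{E}}\big([r];s(a),u\big),\qquad \psi^s([r];u,a):=\pi_{\mathcal{E}}\big([r];u,s(a)\big),
\]
all of which land in $\mathcal{B}=\ker\pi$ because $\mathcal{B}$ is an ideal of $\mathcal{E}$. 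The key observation is that, for each symbol $[r]\in C_3$, the single relation $\pi_{\mathcal{E}}\big(R_0^{1,2}[r];\,x,\,\pi_{\mathcal{E}}(R_2^{1,2}[r];y,z)\big)=\pi_{\mathcal{E}}\big(R_0^{2,1}[r];\,\pi_{\mathcal{E}}(R_1^{2,1}[r];x,y),\,z\big)$ is precisely one of the dendriform axioms (\ref{dend-1})--(\ref{dend-3}) of $\mathcal{E}$; substituting $x=s(a)$, $y=s(b)$, $z=s(c)$ and expanding $\pi_{\mathcal{E}}$ on the $s(-)$-slots through the definitions of $\chi^s,\psi^s$ produces, after the $s(-)$-terms cancel by the corresponding axiom in $\mathcal{A}$, exactly (\ref{nabb-2-dend}), while letting one of $x,y,z$ lie in $\mathcal{B}$ instead produces the three clauses of (\ref{nabb1-dend}). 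Hence $(\chi^s,\psi^s)$ is a non-abelian $2$-cocycle. A second section $t$ yields $(\chi^s,\psi^s)\overset{\phi}{\sim}(\chi^t,\psi^t)$ with $\phi:=s-t$, and an equivalence $\varphi$ of extensions carries $s$ to the section $s':=\varphi s$, for which $(\chi^{s'},\psi^{s'})=(\chi^s,\psi^s)$ since $\varphi$ restricts to $\mathrm{id}_{\mathcal{B}}$; both verifications are verbatim the dendriform analogues of those in Theorem \ref{non-ab-thm}. This gives a well-defined map $\Lambda:\mathrm{Ext}(\mathcal{A},\mathcal{B})\to H^2_{nab}(\mathcal{A},\mathcal{B})$.

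\emph{From cocycles to extensions.} Conversely, from a non-abelian $2$-cocycle $(\chi,\psi)$ I would equip $\mathcal{A}\oplus\mathcal{B}$ with the operations
\[
(a,u)\prec(b,v):=\big(a\prec_{\mathcal{A}}b,\;\psi([1];a,v)+\psi([1];u,b)+\chi([1];a,b)+u\prec_{\mathcal{B}}v\big),
\]
together with the same formula having every occurrence of $[1]$ replaced by $[2]$, defining $\succ$. Writing out (\ref{dend-1})--(\ref{dend-3}) for this structure and sorting the $\mathcal{B}$-component of each according to how many of the three arguments lie in $\mathcal{B}$, one checks that these reduce, respectively, to the axioms of $\mathcal{A}$, to the identities (\ref{nabb-2-dend}) and (\ref{nabb1-dend}) (together with the evident compatibilities of $\psi$ with $\pi_{\mathcal{B}}$), and to the axioms of $\mathcal{B}$; thus $(\mathcal{A}\oplus\mathcal{B},\prec,\succ)$ is a dendriform algebra, call it $\mathcal{A}\oplus_{(\chi,\psi)}\mathcal{B}$, and $0\to\mathcal{B}\to\mathcal{A}\oplus_{(\chi,\psi)}\mathcal{B}\to\mathcal{A}\to 0$ with the evident inclusion and projection is a non-abelian extension. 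If $(\chi,\psi)\overset{\phi}{\sim}(\chi',\psi')$, then $\Theta(a,u):=(a,u+\phi(a))$ is a dendriform isomorphism $\mathcal{A}\oplus_{(\chi,\psi)}\mathcal{B}\to\mathcal{A}\oplus_{(\chi',\psi')}\mathcal{B}$ realizing an equivalence of extensions, so we obtain $\Upsilon:H^2_{nab}(\mathcal{A},\mathcal{B})\to\mathrm{Ext}(\mathcal{A},\mathcal{B})$. Finally $\Lambda$ and $\Upsilon$ are mutually inverse: for $\Upsilon\Lambda=\mathrm{id}$ one verifies that $e\mapsto\big(\pi(e),\,e-s\pi(e)\big)$ is a dendriform isomorphism $\mathcal{E}\to\mathcal{A}\oplus_{(\chi^s,\psi^s)}\mathcal{B}$ compatible with the extension data, and for $\Lambda\Upsilon=\mathrm{id}$ one uses the section $a\mapsto(a,0)$ of $\mathcal{A}\oplus_{(\chi,\psi)}\mathcal{B}$, for which $\chi^s=\chi$ and $\psi^s=\psi$.

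The hard part will be the combinatorial bookkeeping carried by the maps $R_0^{1,\dots,n,\dots,1}$ and $R_i^{1,\dots,n,\dots,1}$, rather than any conceptual point absent from Theorem \ref{non-ab-thm}. One must first fix the dictionary assigning to each symbol $[r]\in C_3$ the dendriform axiom it encodes, and then verify that, for every distribution of $\mathcal{A}$- and $\mathcal{B}$-entries among the three slots, the tables defining $R_0,R_i$ produce exactly the terms of (\ref{nabb1-dend}) and (\ref{nabb-2-dend}), with the correct symbols $[1]$, $[2]$, $[1]+[2]$ attached to the $\chi$- and $\psi$-arguments. Once this correspondence is pinned down, each of the numerous resulting sub-identities is a short direct computation, entirely parallel to the Rota-Baxter case.
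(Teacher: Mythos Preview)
Your proposal is correct and follows essentially the same approach as the paper's own proof: the same section-dependent formulas for $(\chi^s,\psi^s)$, the same $\phi=s-t$ argument for section-independence, the same transport along an equivalence via $s'=\varphi s$, the same twisted dendriform structure on $\mathcal{A}\oplus\mathcal{B}$, and the same $\Theta(a,u)=(a,u+\phi(a))$ realizing equivalence of cocycles as equivalence of extensions. If anything, you are slightly more explicit than the paper---you spell out the inverse bijections via $e\mapsto(\pi(e),e-s\pi(e))$ and the canonical section $a\mapsto(a,0)$, and you flag the combinatorial bookkeeping with the $R$-maps that the paper dismisses as ``not hard to verify.''
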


\begin{proof}
Let $\mathcal{E}$ be a non-abelian extension of $\mathcal{A}$ by $\mathcal{B}$ (Definition \ref{dend-nab-seq}(i)). For any section $s: \mathcal{A} \rightarrow \mathcal{E}$ of the map $\pi$, we define maps 
\begin{align*}
\chi^s : {\bf k}[C_2] \otimes \mathcal{A}^{\otimes 2} \rightarrow  \mathcal{B} ~~~ \text{ and } ~~~ \psi^s : {\bf k}[C_2] \otimes \big( (\mathcal{A} \otimes \mathcal{B}) \oplus (\mathcal{B} \otimes \mathcal{A}) \big) \rightarrow \mathcal{B} ~~\text{ by }
\end{align*}
\begin{align*}
\chi^s ([r]; a, b) =~& \pi_\mathcal{E} ([r]; s(a) , s(b)) ~-~ s \big( \pi_\mathcal{A}([r];a, b) \big),\\
\psi^s ([r] ; a, u) =~& \pi_\mathcal{E} ([r]; s(a) , u), \\
\psi^s ([r]; u, a) =~& \pi_\mathcal{E} ([r]; u, s(a)),
\end{align*}
for $a, b \in \mathcal{A}$, $u \in \mathcal{B}$ and $[r] \in C_2$. Then it is not hard to verify that the maps $\chi^s, \psi^s$ satisfy the identities (\ref{nabb1-dend}) and (\ref{nabb-2-dend}). This shows that the pair $(\chi^s, \psi^s)$ is a nonabelian $2$-cocycle. Let $t : \mathcal{A} \rightarrow \mathcal{E}$ be any other section of the map $\pi$ and $(\chi^t, \psi^t)$ be the corresponding non-abelian $2$-cocycle. Define a map $\phi : \mathcal{A} \rightarrow \mathcal{B}$ by $\phi := s - t$. Then we have
\begin{align*}
\psi^s ([r]; a, u) - \psi^t ([r]; a, u) = \pi_\mathcal{E} ([r]; s(a), u) - \pi_\mathcal{E} ([r]; t(a) , u) = \pi_\mathcal{B} ([r]; \phi (a), u), \\
\psi^s ([r];  u,a) - \psi^t ([r]; u,a) = \pi_\mathcal{E} ([r]; u, s(a) ) - \pi_\mathcal{E} ([r];  u, t(a)) = \pi_\mathcal{B} ([r];  u, \phi (a)),
\end{align*}
\begin{align*}
&\chi^s ([r]; a, b) - \chi^t ([r]; a, b) \\
&= \pi_\mathcal{E} \big( [r]; s(a), s(b) \big) - s \big(  \pi_\mathcal{A} ([r]; a,b) \big) - \pi_\mathcal{E} \big( [r]; t(a), t(b) \big) + t \big(  \pi_\mathcal{A} ([r]; a,b) \big)\\
&= \pi_\mathcal{E} \big( [r]; (\phi + t) (a), (\phi + t)(b)  \big) - \phi \big(  \pi_\mathcal{A} ([r]; a,b) \big) - \pi_\mathcal{E} \big( [r]; t(a), t(b) \big) \\
&= \psi^t ([r]; a, \phi (b) ) + \psi^t ([r]; \phi(a), b)  - \phi (\pi_\mathcal{A} ([r];a, b)) + \pi_\mathcal{E} ([r]; \phi (a), \phi (b)),
\end{align*}
for $a, b \in \mathcal{A}$ and $u \in \mathcal{B}$. Thus $(\chi^s, \psi^s) \overset{\phi}{\sim} (\chi^t, \psi^t)$. Hence $(\chi^s, \psi^s)$ and $(\chi^t, \psi^t)$ induces the same element in $H^2_{nab}(\mathcal{A}, \mathcal{B})$.

If $\mathcal{E}, \mathcal{E}'$ are two equivalent non-abelian extensions and $s: \mathcal{A} \rightarrow \mathcal{E}$ is a section of the first extension, then $s' := \phi  s$ is a section of the second extension. If $(\chi^{s'}, \psi^{s'})$ is the non-abelian $2$-cocycle corresponding to the second extension and its section $s'$, then by a simple calculation one can show that
$\chi^{s'} = \chi^s$ and
$\psi^{s'} = \psi^s.$
Hence $(\chi^{s'}, \psi^{s'}) = (\chi^s, \psi^s)$. As a conclusion, we obtain a map $\Lambda : \mathrm{Ext}(\mathcal{A}, \mathcal{B}) \rightarrow H^2_{nab}(\mathcal{A}, \mathcal{B}).$ 

\medskip

On the other hand, if $(\chi, \psi)$ is a non-abelian $2$-cocycle as of Definition \ref{defin-dend-non2}, we define a dendriform algebra structure on the direct sum $\mathcal{A} \oplus \mathcal{B}$ given by
\begin{align*}
(a,u) \prec_{\chi, \psi} (b, v) = \big( a \prec_\mathcal{A} b,~ \psi ([1]; a, v) + \psi ([1] ; u, b) + \chi ([1]; a, b) + u \prec_\mathcal{B} v   \big),\\
(a,u) \succ_{\chi, \psi} (b, v) = \big(  a \succ_\mathcal{A} b,~ \psi ([2]; a, v) + \psi ([2] ; u, b) + \chi ([2]; a, b) +  u \succ_\mathcal{B} v   \big).
\end{align*}
(Note that the above two products form a dendriform algebra structure as $\chi, \psi$ satisfy the identities (\ref{nabb1-dend}) and (\ref{nabb-2-dend})). We denote this dendriform algebra simply by $\mathcal{A}\oplus_{\chi, \psi} \mathcal{B}$. Moreover, it is a non-abelian extension of $\mathcal{A}$ by $\mathcal{B}$ with the obvious structure maps. Further, let $(\chi, \psi)$ and $(\chi', \psi')$ be two equivalent non-abelian $2$-cocycles (see Definition \ref{defin-dend-non2}(ii)). We define a map $\varphi : \mathcal{A} \oplus \mathcal{B} \rightarrow \mathcal{A} \oplus \mathcal{B}$ by
\begin{align*}
\varphi (a, u) = (a, u + \phi (a)), \text{ for } (a, u) \in \mathcal{A} \oplus \mathcal{B}.
\end{align*}
Then it is straightforward to verify that
\begin{align*}
\varphi ((a, u)  \prec_{\chi, \psi} (b, v) ) = \varphi (a,u) \prec_{\chi', \psi'} \varphi (b,v) ~~ \text{ and } ~~ \varphi ((a, u)  \succ_{\chi, \psi} (b, v) ) = \varphi (a,u) \succ_{\chi', \psi'} \varphi (b,v). 
\end{align*}
This show that $\varphi : \mathcal{A} \oplus_{\chi, \psi} \mathcal{B} \rightarrow \mathcal{A} \oplus_{\chi', \psi'} \mathcal{B}$ is a morphism of dendriform algebras, which induces an equivalence between non-abelian extensions. As a result, we obtain a map $\Upsilon : H^2_{nab}(\mathcal{A}, \mathcal{B}) \rightarrow \mathrm{Ext}(\mathcal{A}, \mathcal{B})$. The result now follows as the map $\Lambda$ and $\Upsilon$ are inverses to each other.
\end{proof}

\medskip

Let $0 \rightarrow \mathcal{B} \xrightarrow{i} \mathcal{E} \xrightarrow{\pi} \mathcal{A} \rightarrow 0$ be a non-abelian extension of dendriform algebras. For any section $s : \mathcal{A} \rightarrow \mathcal{E}$ of the map $\pi$, let $(\chi, \psi)$ be the non-abelian $2$-cocycle on $\mathcal{A}$ with values in $\mathcal{B}$ corresponding to the above extension. Let $(\beta, \alpha) \in \mathrm{Aut} (\mathcal{B}) \times \mathrm{Aut} (\mathcal{A})$ be any pair of dendriform algebra automorphisms. Consider the non-abelian $2$-cocycle $(\chi_{(\beta, \alpha)}, \psi_{(\beta, \alpha)})$, where
\begin{align*}
\chi_{(\beta, \alpha)} ([r]; a, b) = \beta \circ \chi ([r]; \alpha^{-1}(a), \alpha^{-1}(b))  ~~ \text{ and } ~~ \begin{cases} \psi_{(\beta, \alpha)} ([r]; a, u) = \beta \circ \psi ([r]; \alpha^{-1} (a), \beta^{-1}(u)),\\
\psi_{(\beta, \alpha)} ([r]; u, a) = \beta \circ \psi ([r];  \beta^{-1}(u), \alpha^{-1} (a)),
\end{cases}
\end{align*}
for $a, b \in \mathcal{A}$, $u \in \mathcal{B}$ and $[r] \in C_2$. Then the inducibility theorem in the context of dendriform algebras can be given as follows. The proof of the result is similar to the proof of Theorem \ref{thm-inducibility}, hence we do not repeat here.

\begin{theorem}
Let $0 \rightarrow \mathcal{B} \xrightarrow{i} \mathcal{E} \xrightarrow{\pi} \mathcal{A} \rightarrow 0$ be a non-abelian extension of dendriform algebras. A pair $(\beta, \alpha) \in \mathrm{Aut} (\mathcal{B}) \times \mathrm{Aut} (\mathcal{A})$ of dendriform algebra automorphisms is inducible if and only if the non-abelian $2$-cocycles $(\chi, \psi)$ and $(\chi_{(\beta, \alpha)}, \psi_{(\beta, \alpha)})$ are equivalent.
\end{theorem}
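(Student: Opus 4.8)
The plan is to transpose the proof of Theorem \ref{thm-inducibility} to the dendriform setting, using the dictionary of Theorem \ref{non-ab-dend-thm}. Fix the section $s$; then $\mathcal{E}$ is identified, as a dendriform algebra, with $\mathcal{A}\oplus_{\chi,\psi}\mathcal{B}$, so every element of $\mathcal{E}$ is uniquely a pair $(a,u)$ and, for $[r]\in C_2$, the product is
\[
\pi_\mathcal{E}\big([r];(a,u),(b,v)\big)=\Big(\pi_\mathcal{A}([r];a,b),~\psi([r];a,v)+\psi([r];u,b)+\chi([r];a,b)+\pi_\mathcal{B}([r];u,v)\Big),
\]
while the $2$-cocycle attached to $s$ is $\chi([r];a,b)=\pi_\mathcal{E}([r];s(a),s(b))-s\pi_\mathcal{A}([r];a,b)$, $\psi([r];a,u)=\pi_\mathcal{E}([r];s(a),u)$, $\psi([r];u,a)=\pi_\mathcal{E}([r];u,s(a))$. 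Since a dendriform algebra carries no Rota--Baxter operator, there is no $\Phi$-component and no analogue of (\ref{nabb3})--(\ref{nabb4}) to track; the only extra bookkeeping compared with the Rota--Baxter case is that every identity below must be read off for both $[r]=[1]$ and $[r]=[2]$. The combinatorial maps $R^{1,\dots,n,\dots,1}_i$ do not re-enter the argument — they were already absorbed into Theorem \ref{non-ab-dend-thm}. Here $\mathrm{Aut}_\mathcal{B}(\mathcal{E})$, $\overline{\gamma}=\pi\gamma s$ and the map $\tau$ are defined in the evident analogy with Section \ref{sec-4}.

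For the forward implication, suppose $(\beta,\alpha)$ is inducible, witnessed by $\gamma\in\mathrm{Aut}_\mathcal{B}(\mathcal{E})$ with $\gamma|_\mathcal{B}=\beta$ and $\overline{\gamma}=\alpha$. I would set $\phi:\mathcal{A}\to\mathcal{B}$, $\phi(a):=(\gamma s\alpha^{-1}-s)(a)$, first checking $\pi\phi=0$ so that $\phi$ indeed takes values in $\ker\pi\cong\mathcal{B}$. Then, using that $\gamma$ is a dendriform morphism restricting to $\beta$ on $\mathcal{B}$, that $\pi_\mathcal{E}$ restricted to $\mathcal{B}$ is $\pi_\mathcal{B}$, the formulas above for $\chi,\psi$, and that $\alpha$ is a dendriform automorphism of $\mathcal{A}$ (so $\pi_\mathcal{A}([r];\alpha^{-1}(a),\alpha^{-1}(b))=\alpha^{-1}\pi_\mathcal{A}([r];a,b)$), I would expand $\psi_{(\beta,\alpha)}([r];a,u)-\psi([r];a,u)$, $\psi_{(\beta,\alpha)}([r];u,a)-\psi([r];u,a)$ and $\chi_{(\beta,\alpha)}([r];a,b)-\chi([r];a,b)$; each collapses exactly to the right-hand side prescribed in Definition \ref{defin-dend-non2}(ii), so $(\chi_{(\beta,\alpha)},\psi_{(\beta,\alpha)})\overset{\phi}{\sim}(\chi,\psi)$.

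For the converse, suppose an equivalence $\phi:\mathcal{A}\to\mathcal{B}$ between $(\chi_{(\beta,\alpha)},\psi_{(\beta,\alpha)})$ and $(\chi,\psi)$ is given. In $\mathcal{E}\cong\mathcal{A}\oplus_{\chi,\psi}\mathcal{B}$ I define $\gamma:\mathcal{E}\to\mathcal{E}$ by $\gamma(a,u):=\big(\alpha(a),\beta(u)+\phi\alpha(a)\big)$. Bijectivity is immediate: $\gamma$ is injective because $\alpha,\beta$ are, and $\big(\alpha^{-1}(a),\beta^{-1}(u)-\beta^{-1}\phi(a)\big)$ is an explicit preimage of $(a,u)$. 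The substance is the identity $\gamma\big(\pi_\mathcal{E}([r];(a,u),(b,v))\big)=\pi_\mathcal{E}\big([r];\gamma(a,u),\gamma(b,v)\big)$ for $[r]\in C_2$: expanding the right-hand side by bilinearity, I would group the resulting terms into four families and simplify each — the mixed $\mathcal{A}$--$\mathcal{B}$ terms into $\beta\psi([r];a,v)$ and $\beta\psi([r];u,b)$ by applying the two $\psi$-relations of Definition \ref{defin-dend-non2}(ii) \emph{at the twisted inputs} $(\alpha(a),\beta(v))$ and $(\beta(u),\alpha(b))$ and then the defining formulas for $\psi_{(\beta,\alpha)}$; the pure-$\chi$ family into $\beta\chi([r];a,b)+\phi\alpha\pi_\mathcal{A}([r];a,b)$ by applying the $\chi$-relation at $(\alpha(a),\alpha(b))$ together with $\pi_\mathcal{A}([r];\alpha(a),\alpha(b))=\alpha\pi_\mathcal{A}([r];a,b)$; and the pure-$\mathcal{B}$ family into $\beta\pi_\mathcal{B}([r];u,v)$ since $\beta$ is a dendriform morphism of $\mathcal{B}$. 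Summing, the two sides agree, so $\gamma\in\mathrm{Aut}(\mathcal{E})$, and the formula for $\gamma$ yields $\gamma|_\mathcal{B}=\beta$ and $\overline{\gamma}=\pi\gamma s=\alpha$, i.e. $\tau(\gamma)=(\beta,\alpha)$; hence $(\beta,\alpha)$ is inducible.

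The main obstacle is precisely this morphism check in the converse direction. Because the extension is genuinely non-abelian, $\pi_\mathcal{B}$ need not vanish, so the correction terms $\phi\alpha(a)$ in the definition of $\gamma$ interact with both the $\psi$- and the $\chi$-parts of the twisted product; keeping the four groupings straight, and remembering to evaluate the equivalence identities at the $\alpha$- and $\beta$-translated arguments rather than at $a,u$ themselves, is where the care lies. Apart from this, the argument follows the Rota--Baxter template verbatim, with the sole simplification that no Rota--Baxter compatibility need be carried along and the sole cost that each displayed identity has to be written twice, once for $[r]=[1]$ and once for $[r]=[2]$.
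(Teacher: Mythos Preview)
Your proposal is correct and follows precisely the route the paper intends: the paper itself gives no separate proof for the dendriform statement, saying only that ``the proof of the result is similar to the proof of Theorem \ref{thm-inducibility}, hence we do not repeat here.'' Your plan---defining $\phi=\gamma s\alpha^{-1}-s$ for the forward implication and $\gamma(a,u)=(\alpha(a),\beta(u)+\phi\alpha(a))$ for the converse, then verifying the dendriform-morphism identity for each $[r]\in C_2$---is exactly this transposition, with the $\Phi$-component correctly dropped and the bookkeeping over $[1],[2]$ correctly flagged.
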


With all the above notations, the Wells map $\mathcal{W} : \mathrm{Aut}(\mathcal{B}) \times \mathrm{Aut}(\mathcal{A}) \rightarrow H^2_{nab} (\mathcal{A}, \mathcal{B})$ is defined by
\begin{align*}
\mathcal{W}((\beta, \alpha)) = [ \chi_{(\beta, \alpha)}, \psi_{(\beta, \alpha)}) - (\chi, \psi) ],
\end{align*}
where $(\chi, \psi)$ is the non-abelian $2$-cocycle corresponding to the non-abelian extension of dendriform algebras. A pair $(\beta, \alpha) \in \mathrm{Aut} (\mathcal{B}) \times \mathrm{Aut} (\mathcal{A})$ of dendriform algebra automorphisms is inducible if and only if $\mathcal{W} ((\beta, \alpha)) = 0$. Further, the fundamental exact sequence of Wells is given by
\begin{align*}
0 \rightarrow \mathrm{Aut}_\mathcal{B}^{\mathcal{B}, \mathcal{A}} (\mathcal{E}) \xrightarrow{\iota} \mathrm{Aut}_\mathcal{B} (\mathcal{E}) \xrightarrow{\tau} \mathrm{Aut}(\mathcal{B}) \times  \mathrm{Aut}(\mathcal{A}) \xrightarrow{\mathcal{W}} H^2_{nab} (\mathcal{A}, \mathcal{B}), 
\end{align*}
where all the notations have meanings similar to the context of Rota-Baxter algebras.




\end{document}